 \tikzset{
    vertex/.style = {
        circle,
        draw,
        outer sep = 3pt,
        inner sep = 3pt,
    },edge/.style = {->,> = latex'}
}
\newcommand{\HE}{Name of Handling Editor}
\newcommand{\DoS}{Month/Day/Year}
\newcommand{\DoA}{Month/Day/Year}
\newcommand{\CA}{Name of Corresponding Author}
\newcommand{\Names}{Balaji R., Ravindra B. Bapat, and Shivani Goel}
\newcommand{\Title}{Resistance distance in directed cactus graphs}
\newtheorem{thm}[theorem]{Theorem}
\newtheorem{prop}[theorem]{Proposition}
\newtheorem{example}[theorem]{Example}
\newtheorem{lem}[theorem]{Lemma}
\def\adj{\mathop{\rm adj}}
\def\rank{\mathop{\rm rank}}
\newcommand{\rr}{\mathbb{R}}
\newcommand{\1}{\mathbf{1}}
\newcommand{\E}{\mathcal{E}}
\def\det{{\rm det}}
\def\csum{{\rm cofsum}}
\def\z{\mathbf{Z}}
\begin{document}


\setcounter{page}{1}

\thispagestyle{empty}

 \title{\Title\thanks{Received
 by the editors on \DoS.
 Accepted for publication on \DoA. 
 Handling Editor: \HE. Corresponding Author: \CA}}

\author{Balaji R. \thanks{Department of Mathematics, IIT Madras, Chennai, India
(\email{balaji5@iitm.ac.in}). Supported by Department of science and Technology -India under the project MATRICS (MTR/2017/000342).}
\and Ravindra B. Bapat \thanks{Theoretical Statistics and Mathematics Unit, Indian Statistical Institute, Delhi, India (\email{rbb@isid.ac.in}).}
\and 
Shivani Goel \thanks{Department of Mathematics, IIT Madras, Chennai, India
(\email{shivani.goel.maths@gmail.com}).}}

\markboth{\Names}{\Title}

\maketitle

\begin{abstract}
Let $G=(V,E)$ be a strongly connected and balanced digraph with vertex set $V=\{1,\dotsc,n\}$.
The classical distance $d_{ij}$ between any two vertices $i$ and $j$ in $G$ is the minimum length of all the 
directed paths joining $i$ and $j$. 
The resistance distance (or, simply the resistance) between any two vertices $i$ and $j$ in $V$ is defined by
$r_{ij}:=l_{ii}^{\dag}+l_{jj}^{\dag}-2l_{ij}^{\dag}$, where $l_{pq}^{\dagger}$ is the $(p,q)^{\rm th}$ entry of the Moore-Penrose inverse of $L$ which is the Laplacian matrix of $G$. In practice, the resistance $r_{ij}$ is
more significant than the classical distance. One reason for this is, numerical examples show that
the resistance distance between $i$ and $j$ is always less than or equal to the classical distance, i.e.
$r_{ij} \leq d_{ij}$. However, no proof for this inequality is known. In this paper, we show that this inequality holds for all directed cactus graphs. 
\end{abstract}

\begin{keywords}
Strongly connected balanced digraph, directed cactus graph, Laplacian matrix, Moore-Penrose inverse, cofactor sums.
\end{keywords}
\begin{AMS}
05C50. 
\end{AMS}



\section{Introduction}

Consider a simple undirected connected graph $H = (W,E)$, where $W:=\{1,\dotsc,n\}$ is the set of all vertices and $E$ is the set of all edges. If $i$ and $j$ are adjacent in $W$, we write $ij$ to denote an element in  $E$ and $\delta_i$ to denote the degree of the vertex $i$. There are several matrices  associated with $H$. Define
\begin{equation*}
    l_{ij}:= \begin{cases}
    \delta_i & \text{if}~ i=j \\ 
    -1 & \text{if}~ i \neq j ~\text{and}~ ij \in E \\
    0 & \text{otherwise}.
    \end{cases}
\end{equation*}
The Laplacian matrix of $H$ is then the matrix $L:=[l_{ij}]$. If $x$ and $y$ are any two vertices, then the classical distance $d_{xy}$ is defined as the length of the shortest path connecting $x$ and $y$. If there are multiple paths connecting two distinct vertices, then in applications, those two vertices are interpreted as better communicated. Thus, it makes more sense to define 
a distance which is shorter than the classical distance. Let $L^\dag$  denote the Moore-Penrose inverse of $L$ and the $(i,j)^{\rm th}$-entry of $L^{\dag}$ be $l_{ij}^\dag$. The resistance distance $R_{xy}$ between vertices $x$ and $y$ is defined by
\begin{equation} \label{res}
  R_{xy} := l_{xx}^\dag+l_{yy}^\dag  - 2l_{xy}^\dag.
\end{equation}
In order to address the drawbacks of classical distance, Klein and Randi\'c  introduced the resistance 
distance \cref{res} in \cite{kr}. A connected graph is a formal representation of an electrical network with unit resistance placed on each of its edges. If $i$ and $j$ are any two vertices, and if current is allowed to enter the electrical circuit only at $i$ and to leave at $j$, then the effective resistance between $i$ and $j$ is same as the resistance distance $R_{ij}$.
The resistance matrix is now defined by $[R_{ij}]$. Resistance matrices of connected graphs have a wide literature. Klein and Randi\'c \cite{kr} showed that  $R_{ij}: W \times W \to \rr$ is a metric. A formula for the inverse of a resistance matrix is obtained in \cite{rbbres}. This in turn extends the remarkable formula of Graham and Lov\'asz to find the inverse of 
the distance matrix of a tree.
All resistance matrices are Euclidean distance matrices (EDMs): see Bapat and Raghavan \cite{Bapat}. Hence the wide theory of EDMs are applicable to resistance matrices. Our interest 
on resistance matrices in this paper is the following inequality:
If $u$ and $v$ are any two distinct vertices, then $R_{uv} \leq d_{uv}$: see
Theorem D in \cite{kr}.

\subsection{Extension of resistance to digraphs}
In \cite{bal_bap_shiv}, the concept of resistance distance is extended for digraphs. Let $G=(V,\E)$ be a simple digraph with vertex set $V = \{1,2,\ldots,n\}$ and edge set $\E$. For $i,j \in V$, we write $(i,j) \in \E$ whenever there is a directed edge from $i$ to $j$. For a vertex $i \in V$, the indegree $\delta_{i}^{in}$ and the outdegree $\delta_{i}^{out}$ are defined as follows:
\begin{equation*}
    \delta_{i}^{in} := |\{j \in V| ~(j,i) \in \E\}| ~~\mbox{and}~~
    \delta_{i}^{out} := |\{j \in V| ~(i,j) \in \E\}|.
\end{equation*}
The Laplacian matrix of $G$ is $L(G) = (l_{ij})$, where for each $i,j \in V$
\begin{equation*}
    l_{ij} := \begin{cases}
    \delta_i^{out} & \text{if}~ i=j \\ 
    -1 & \text{if}~ i \neq j ~\text{and}~ (i,j) \in \E \\
    0 & \text{otherwise}.
    \end{cases}
\end{equation*}
A digraph is strongly connected if there is a directed path between any two distinct vertices.
If $\delta_x^{out}=\delta_x^{in}$, then the vertex $x$ is said to be balanced. A digraph is called balanced if every vertex is balanced. 
In this paper, we consider only strongly connected and balanced digraphs. With this assumption, the Laplacian $L(G)$ will have the following properties:
$\rank({L(G)}) = n-1$, 
row and column sums of $L(G)$ are equal to zero: see \cite{bal_bap_shiv}.  As usual, let $L^\dag = (l_{ij}^\dag)$ be the Moore-Penrose inverse of $L(G)$. It can be noted that $L(G)$ is not a symmetric matrix in general. From now on, we will use $L$ to denote the Laplacian matrix $L(G)$.

The resistance $r_{ij}$ between any two vertices $i$ and $j$ in $V$ is defined by
\begin{equation}\label{r_ij}
    r_{ij}: = l_{ii}^\dag+l_{jj}^\dag-2l_{ij}^\dag.
\end{equation}
In \cite{bal_bap_shiv}, by using certain specialized results on $\z$-matrices and the Moore-Penrose inverse, it is shown that 
\[r_{ij} \geq 0~~\forall i,j, \]
 and for each $i,j,k \in V$ 
\begin{equation*}
    r_{ij} \leq r_{ik}+r_{kj}.
\end{equation*}
For each distinct pair of vertices $i$ and $j$ in $V$, let $d_{ij}$ be the length of the shortest directed path from $i$ to $j$ and define $d_{ii}:=0$. The non-negative real number $d_{ij}$ is the classical distance between $i$ and $j$. By numerical experiments, we noted that the inequality $r_{ij} \leq d_{ij}$ always holds. 

\begin{example}\rm
Consider the graph below. 
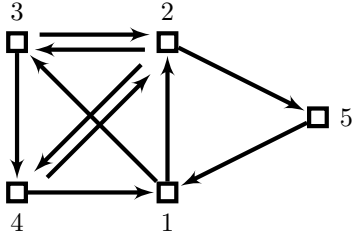
\begin{figure}[tbhp]
\centering
\begin{tikzpicture}[shorten >=1pt, auto, node distance=3cm, ultra thick,
   node_style/.style={circle,draw=black,fill=white !20!,font=\sffamily\Large\bfseries},
   edge_style/.style={draw=black, ultra thick}]
\node[label=above:$3$,draw] (3) at  (0,0) {};
\node[label=above:$2$,draw]  (2) at  (2,0) {};
\node[label=right:$5$,draw] (5) at  (4,-1) {};
\node[label=below:$1$,draw] (1) at  (2,-2) {};
\node[label=below:$4$,draw] (4) at  (0,-2) {};
\draw[edge]  (1) to (2);
\draw[edge]  (1) to (3);
\draw[edge]  (1.7,-0.1) to (0.2,-0.1);
\draw[edge]  (0.3,0.1) to (1.8,0.1);
\draw[edge]  (1.65,-0.3) to (0.2,-1.75);
\draw[edge]  (0.4,-1.8) to (1.8,-0.4);
\draw[edge]  (2) to (5);
\draw[edge]  (3) to (4);
\draw[edge]  (4) to (1);
\draw[edge]  (5) to (1);
\end{tikzpicture}
\caption{A strongly connected and balanced digraph on $5$ vertices.} \label{eg,digraph}
\end{figure}

The Laplacian matrix and its Moore-Penrose inverse are
\begin{equation*}
L = 
\left[
{\begin{array}{rrrrrrr}
2 & -1 & -1 & 0 & 0 \\
0 & 3 & -1 & -1 & -1 \\
0 & -1 & 2 & -1 & 0 \\
-1 & -1 & 0 & 2 & 0 \\
-1 & 0 & 0 & 0 & 1
\end{array}}
\right]~~\mbox{and}~~ L^\dag = 
\left[
{\begin{array}{rrrrrrr}
\frac{9}{35} & 0 & \frac{1}{35} & -\frac{3}{35} & -\frac{1}{5} \\
-\frac{4}{35} & \frac{1}{5} & -\frac{2}{35} & -\frac{1}{35} & 0 \\
-\frac{6}{35} & 0 & \frac{11}{35} & \frac{2}{35} & -\frac{1}{5} \\
-\frac{1}{35} & 0 & -\frac{4}{35} & \frac{12}{35} & -\frac{1}{5} \\
\frac{2}{35} & -\frac{1}{5} & -\frac{6}{35} & -\frac{2}{7} & \frac{3}{5}
\end{array}}
\right].
\end{equation*}
The resistance and distance matrices of $G$ are:
\begin{equation*}
R =[r_{ij}] = \left[
{\begin{array}{rrrrrrr}
0 & \frac{16}{35} & \frac{18}{35} & \frac{27}{35} & \frac{44}{35} \\
\frac{24}{35} & 0 & \frac{22}{35} & \frac{3}{5} & \frac{4}{5} \\
\frac{32}{35} & \frac{18}{35} & 0 & \frac{19}{35} & \frac{46}{35} \\
\frac{23}{35} & \frac{19}{35} & \frac{31}{35} & 0 & \frac{47}{35} \\
\frac{26}{35} & \frac{6}{5} & \frac{44}{35} & \frac{53}{35} & 0
\end{array}}
\right]~~\mbox{and}~~
D = \left[
{\begin{array}{rrrrrrr}
0 & 1 & 1 & 2 & 2 \\
2 & 0 & 1 & 1 & 1 \\
2 & 1 & 0 & 1 & 2 \\
1 & 1 & 2 & 0 & 2 \\
1 & 2 & 2 & 3 & 0
\end{array}}
\right].
\end{equation*}
\end{example}

It is easily seen that $r_{ij} \leq d_{ij}$ for each $i,j$.
Given a general strongly connected and balanced digraph, we do not know how to prove the above inequality. In this paper, when $G$ is a directed cactus graph, we give a proof for this inequality. 

\subsection{Directed cactus graphs}
\begin{definition}\label{cactus}\rm
A directed cactus graph is a strongly connected digraph in which each edge is contained in exactly one directed cycle. 
\end{definition}
Here is an equivalent condition for a directed cactus: A digraph $G$ is a directed cactus if and only if any two directed cycles of $G$ share at most one common vertex.  In a directed cactus, for each vertex $i$,  $\delta_{i}^{in}=\delta_{i}^{out}$ and hence balanced. 
The graph $G$ given in \cref{eg:cactus} is a directed cactus graph.
\begin{figure}[tbhp]
\centering
\begin{tikzpicture}[shorten >=1pt, auto, node distance=3cm, ultra thick,
   node_style/.style={circle,draw=black,fill=white !20!,font=\sffamily\Large\bfseries},
   edge_style/.style={draw=black, ultra thick}]
\node[label=above:$1$,draw] (1) at  (1.75,0) {};
\node[label=above:$2$,draw] (2) at  (3.5,0) {};
\node[label=below:$3$,draw] (3) at  (3.5,-2) {};
\node[label=above:$6$,draw] (6) at  (0,0) {};
\node[label=below:$4$,draw] (4) at  (0,-2) {};
\node[label=left:$5$,draw] (5) at  (-1.5,-1) {};
\node[label=below:$7$,draw] (7) at  (1.75,-2) {};
\draw[edge]  (1) to (2);
\draw[edge]  (2) to (3);
\draw[edge]  (3) to (1);
\draw[edge]  (1) to (4);
\draw[edge]  (4) to (5);
\draw[edge]  (5) to (6);
\draw[edge]  (6) to (1);
\draw[edge]  (1.65,-0.38) to (1.65,-1.7);
\draw[edge]  (1.85,-1.62) to (1.85,-0.35);
\end{tikzpicture}
\caption{A directed cactus graph on $7$ vertices.} \label{eg:cactus}
\end{figure}
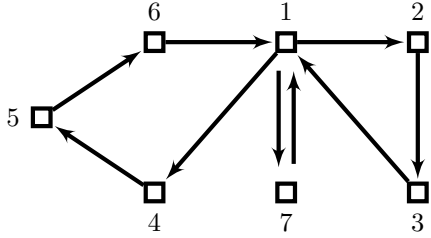

Distance matrices of directed cactus appear in \cite{chen}. An interesting formula for the determinant of the distance matrix $D:=(d_{ij})$ of a cactoid graph and an expression for the inverse of $D$ are computed in \cite{chen}. 

\section{Preliminaries}

\begin{definition}\rm\label{directedcycle}
A directed cycle graph is a directed version of a cycle graph with all edges being oriented in the same direction. For $n>1$, we shall use $C_n = (V,\E)$ to denote a directed cycle on $n$ vertices. 
\end{definition}
An example of a directed cycle on $5$ vertices is shown in \cref{eg:dircycle}.
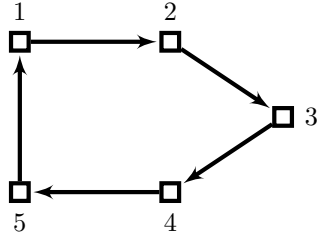
\begin{figure}[tbhp]
\centering
\begin{tikzpicture}[shorten >=1pt, auto, node distance=3cm, ultra thick,
   node_style/.style={circle,draw=black,fill=white !20!,font=\sffamily\Large\bfseries},
   edge_style/.style={draw=black, ultra thick}]
\node[label=above:$1$,draw] (1) at  (0,0) {};
\node[label=above:$2$,draw] (2) at  (2,0) {};
\node[label=right:$3$,draw] (3) at  (3.5,-1) {};
\node[label=below:$4$,draw] (4) at  (2,-2) {};
\node[label=below:$5$,draw] (5) at  (0,-2) {};
\draw[edge]  (1) to (2);
\draw[edge]  (2) to (3);
\draw[edge]  (3) to (4);
\draw[edge]  (4) to (5);
\draw[edge]  (5) to (1); 
\end{tikzpicture}
\caption{Directed cycle Graph $C_5$.} \label{eg:dircycle}
\end{figure}

\begin{definition}\rm
Suppose $G = (V,\E)$ is a digraph with vertex set $V = \{1,2,...,n\}$ and Laplacian matrix $L$. A spanning \emph{tree} of $G$ rooted at vertex $i$ is a connected subgraph $T$ with vertex set $V$ such that
\begin{enumerate}
\item [\rm (i)] Every vertex of $T$ other than $i$ has indegree $1$.
\item [\rm (ii)] The vertex $i$ has indegree $0$.
\item [\rm (iii)] $T$ has no directed cycles. 
\end{enumerate}
\end{definition}

\begin{example} \rm
The graph $H$ in \cref{eg:span_tree} has two spanning trees rooted at $1$.
\begin{figure}[tbhp]
\centering
~~~\subfloat[]{\begin{tikzpicture}[shorten >=1pt, auto, node distance=3cm, ultra thick,
   node_style/.style={circle,draw=black,fill=white !20!,font=\sffamily\Large\bfseries},
   edge_style/.style={draw=black, ultra thick}]
\node[label=above:$1$,draw] (1) at  (0,0) {};
\node[label=above:$2$,draw] (2) at  (2,0) {};
\node[label=below:$3$,draw] (3) at  (3.5,-1) {};
\node[label=below:$4$,draw] (4) at  (2,-2) {};
\node[label=below:$5$,draw] (5) at  (0,-2) {};
\draw[edge]  (1) to (2);
\draw[edge]  (2) to (3);
\draw[edge]  (3) to (4);
\draw[edge]  (4) to (5);
\draw[edge]  (-0.1,-1.6) to (-0.1,-0.3); 
\draw[edge]  (0.1,-0.35) to (0.1,-1.7); 
\end{tikzpicture}}
~~~~~~~~~~\subfloat[]{\begin{tikzpicture}[shorten >=1pt, auto, node distance=3cm, ultra thick,
   node_style/.style={circle,draw=black,fill=white !20!,font=\sffamily\Large\bfseries},
   edge_style/.style={draw=black, ultra thick}]
\node[label=above:$1$,draw] (1) at  (0,0) {};
\node[label=above:$2$,draw] (2) at  (2,0) {};
\node[label=below:$3$,draw] (3) at  (3.5,-1) {};
\node[label=below:$4$,draw] (4) at  (2,-2) {};
\node[label=below:$5$,draw] (5) at  (0,-2) {};
\draw[edge]  (1) to (2);
\draw[edge]  (2) to (3);
\draw[edge]  (3) to (4);
\draw[edge]  (4) to (5);
\end{tikzpicture} 
~\begin{tikzpicture}[shorten >=1pt, auto, node distance=3cm, ultra thick,
   node_style/.style={circle,draw=black,fill=white !20!,font=\sffamily\Large\bfseries},
   edge_style/.style={draw=black, ultra thick}]
\node[label=above:$1$,draw] (1) at  (0,0) {};
\node[label=above:$2$,draw] (2) at  (2,0) {};
\node[label=below:$3$,draw] (3) at  (3.5,-1) {};
\node[label=below:$4$,draw] (4) at  (2,-2) {};
\node[label=below:$5$,draw] (5) at  (0,-2) {};
\draw[edge]  (1) to (2);
\draw[edge]  (2) to (3);
\draw[edge]  (3) to (4);
\draw[edge]  (1) to (5); 
\end{tikzpicture}}
\caption{(a) Digraph $H$ (b) Spanning trees of $H$ rooted at $1$.} \label{eg:span_tree}
\end{figure}
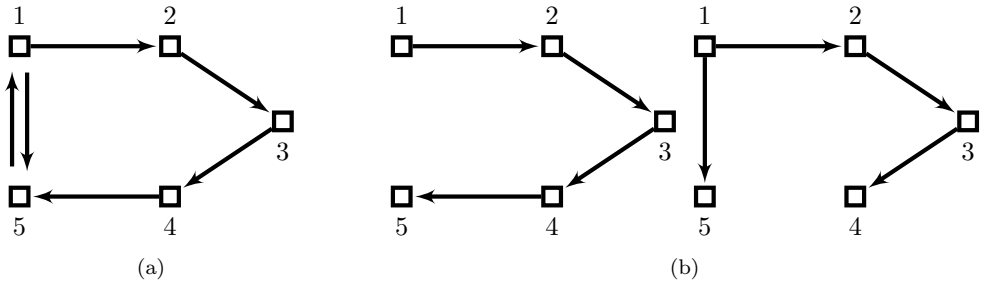 
\end{example}

We use the following notation.
If $\Delta_1$ and $\Delta_2$ are non-empty subsets of $\{1,\dotsc,n\}$ and
$\pi:\Delta_1 \rightarrow \Delta_2$ is a bijection, then the pair $\{i,j\} \subset \Delta_1$ is called an inversion in $\pi$ if $i<j$ and $\pi(i)>\pi(j)$. Let $n(\pi)$ be the number of inversions in $\pi$.
For an $n \times n$ matrix $A$, $A[\Delta_1,\Delta_2]$ will denote the submatrix of $A$ obtained by choosing rows and columns corresponding to $\Delta_1$ and $\Delta_2$, respectively. For $\Delta \subseteq \{1,2,\ldots,n\}$, we define $\alpha(\Delta) = \sum_{i \in \Delta} i$.
Our main tool will be the following theorem from \cite{chaiken}. 

\begin{thm}[All minors matrix tree theorem]\label{mtt} Let $G=(V,E)$ be a digraph with vertex set $V = \{1,2,\ldots,n\}$ and Laplacian matrix $L$. 
Let $\Delta_1,\Delta_2 \subset V$ be such that $|\Delta_1|=|\Delta_2|$. Then
\[ \det(L[\Delta_1^c,\Delta_2^c]) = (-1)^{\alpha(\Delta_1)+ \alpha(\Delta_2)} \sum_F (-1)^{n(\pi)}.\]
where the sum is over all spanning forests $F$ such that
\begin{enumerate}
    \item[\rm(a)] $F$ contains exactly $|\Delta_1|=|\Delta_2|$ trees.
    \item[\rm(b)] each tree in $F$ contains exactly one vertex in $\Delta_2$ and exactly one vertex in $\Delta_1$.
    \item[\rm(c)] each directed edge in $F$ is directed away from the vertex in $\Delta_2$ of the tree containing that directed edge. (i.e. each vertex in $\Delta_2$ is the root of the tree containing it.)
\end{enumerate}
$F$ defines a bijection $\pi:\Delta_1 \rightarrow \Delta_2$ such that $\pi(j)=i$ if and only if $i$ and $j$ are in the same oriented tree of $F$.
\end{thm}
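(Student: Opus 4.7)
The plan is to prove this as Chaiken does, via a Cauchy--Binet expansion combined with a careful sign bookkeeping. First, I would factor the Laplacian $L$ as a product $L = SK$ where, enumerating the edges of $G$ as $e_1,\dots,e_m$, the matrix $S$ is $n\times m$ with entry $S_{i,e}=1$ if $i$ is the tail of $e$ and $0$ otherwise, and $K$ is $m\times n$ with entry $K_{e,j}=1$ if $j$ is the tail of $e$, $-1$ if $j$ is the head of $e$, and $0$ otherwise. A direct check on the diagonal and off-diagonal entries confirms $SK=L$.

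Applying Cauchy--Binet to the minor gives
\begin{equation*}
\det L[\Delta_1^c,\Delta_2^c] \;=\; \sum_{E'} \det S[\Delta_1^c,E']\;\det K[E',\Delta_2^c],
\end{equation*}
where $E'$ ranges over the $(n-|\Delta_1|)$-subsets of the edge set. Now $\det S[\Delta_1^c,E']$ is nonzero only when the tails of the edges in $E'$ are in bijection with $\Delta_1^c$; in that case the matrix is a signed permutation and the determinant is $\pm 1$. This forces every vertex outside $\Delta_1$ to be the tail of precisely one edge of $E'$, and every vertex of $\Delta_1$ to be the tail of none. So the subgraph carried by $E'$ has out-degree one at each vertex of $\Delta_1^c$ and out-degree zero at each vertex of $\Delta_1$.

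Next I would analyze when $\det K[E',\Delta_2^c]$ is nonzero. Since $K[E',V]$ is the signed incidence matrix of the underlying subgraph, standard arguments show that deleting the columns of $\Delta_2$ yields a nonsingular submatrix exactly when the subgraph $E'$ is acyclic and each of its connected components contains exactly one vertex of $\Delta_2$. Combined with the out-degree constraint from the $S$-factor, this shows that the surviving $E'$ are precisely the spanning forests $F$ with exactly $|\Delta_1|$ trees, each rooted at a vertex of $\Delta_2$ and oriented away from that root, and each containing a unique vertex of $\Delta_1$. Each such $F$ therefore defines the bijection $\pi:\Delta_1\to\Delta_2$ appearing in the statement.

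The main obstacle is the sign computation. For a valid $F$, I would order its edges so that $K[E',\Delta_2^c]$ is block lower-triangular with $\pm 1$ diagonal (traversing each tree outward from its $\Delta_2$-root, so every non-root vertex is matched with the unique incoming edge, which contributes $-1$ on the diagonal). The two signed-permutation factors combine, after reordering rows and columns to match the natural orders on $\Delta_1^c$ and $\Delta_2^c$, to $(-1)^{\alpha(\Delta_1^c)+\alpha(\Delta_2^c)}(-1)^{n(\pi)}$, where the $(-1)^{n(\pi)}$ arises from pairing the $\Delta_1$-vertex of each tree with the column of its $\Delta_2$-root. Finally, from $\alpha(\Delta)+\alpha(\Delta^c)=n(n+1)/2$ one gets $\alpha(\Delta_1^c)+\alpha(\Delta_2^c)\equiv \alpha(\Delta_1)+\alpha(\Delta_2)\pmod 2$, yielding the claimed formula. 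The delicate part is checking that the induced tree-by-tree permutation sign is precisely $(-1)^{n(\pi)}$; I would handle this by a transposition-counting argument on the chosen edge ordering.
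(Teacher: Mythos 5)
The paper itself does not prove this theorem --- it is imported from Chaiken, whose published argument is a direct combinatorial expansion of the minor with a sign-reversing involution --- so a Cauchy--Binet proof is a legitimate alternative route in principle, and your factorization $L=SK$ (with the rows of $S$ indexed by tails) is indeed correct for the paper's Laplacian ($l_{ii}=\delta_i^{\mathrm{out}}$, $l_{ij}=-1$ when $(i,j)\in \E$). The gap comes at the step where you identify the surviving terms. Your analysis of $\det S[\Delta_1^c,E']$ is right: it forces every vertex of $\Delta_1^c$ to have out-degree exactly one in $E'$ and every vertex of $\Delta_1$ to have out-degree zero. Combined with the forest condition coming from the $K$-factor, this forces each tree to be oriented \emph{toward} its unique $\Delta_1$-vertex (follow the unique out-edges; since the component is acyclic they must terminate at its only out-degree-zero vertex), \emph{not} ``rooted at a vertex of $\Delta_2$ and oriented away from that root'' as you assert. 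The two descriptions are genuinely different statements: for the digraph on $\{1,2\}$ with the single edge $(1,2)$ and $\Delta_1=\Delta_2=\{2\}$, the minor equals $l_{11}=1$, there is exactly one spanning tree oriented toward vertex $2$, and there is no spanning tree oriented away from vertex $2$. So the characterization you claim does not follow from your constraints; with the Laplacian convention you factored, what your argument can deliver is the version of the theorem in which the roles of the two sets (equivalently, the orientation of the trees) are interchanged, and to land on the statement as worded you must either transpose $L$ (which swaps $\Delta_1$ with $\Delta_2$ and reverses every edge) or otherwise reconcile the convention explicitly rather than silently.

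The same slip infects the sign computation, which is in any case only sketched. Your block-triangularization of $K[E',\Delta_2^c]$ matches each non-root vertex with ``the unique incoming edge,'' which presupposes the away-from-the-root orientation; with the orientation actually forced by the $S$-factor the diagonal entries of your triangular form are $\pm1$ rather than uniformly $-1$, and those edge-by-edge signs are exactly what must be combined with the permutation sign of $S[\Delta_1^c,E']$ and with the row/column reorderings to produce $(-1)^{\alpha(\Delta_1)+\alpha(\Delta_2)}(-1)^{n(\pi)}$. The reduction $\alpha(\Delta_1^c)+\alpha(\Delta_2^c)\equiv\alpha(\Delta_1)+\alpha(\Delta_2)\pmod 2$ is fine, but the key claim that the residual permutation sign is precisely $(-1)^{n(\pi)}$ is deferred to an unspecified ``transposition-counting argument,'' so the only nontrivial part of the sign statement is not actually established. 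To complete the proof you need to (i) fix the orientation/convention mismatch and restate which set indexes the roots, and (ii) carry out the sign bookkeeping in full, tracking the $\pm1$ diagonal entries of the $K$-block together with the two reordering permutations.
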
  
Let $\kappa(G,i)$ be the number of spanning trees of $G$ rooted at $i$. By \cref{mtt}, it immediately follows that 
\begin{equation}\label{kappa}
       \kappa(G,i) = \det(L[\{i\}^c,\{i\}^c]).
\end{equation}

Let $i,j,k \in V$. We introduce two notation.
\begin{enumerate}
\item Let $\#(F[\{i\rightarrow\},\{j\rightarrow\}])$ denote the number of spanning forests $F$ of $G$ such that 
(i) $F$ contains exactly $2$ trees, (ii) each tree in $F$ contains either $i$ or $j$, and (iii) vertices $i$ and $j$ are the roots of the respective trees containing them. 
\item Let $\#(F[\{k \rightarrow\},\{j\rightarrow,i\}])$ denote the number of spanning forests $F$ of $G$ such that (i) $F$ contains exactly $2$ trees, (ii) each tree in $F$ exactly contains either $k$ or both $i$ and $j$, and (iii) vertices $k$ and $j$ are the roots of the respective trees containing them. 
\end{enumerate}
From \cref{mtt}, we deduce the following proposition which in turn will be used to prove our main result.
\begin{prop}\label{mtt:2vert:rem} 
Let $i,j \in V$ be two distinct vertices. Then
\begin{enumerate}
\item[\rm(a)]  \[\det(L[\{i,j\}^c,\{i,j\}^c]) = \#(F[\{i\rightarrow\},\{j\rightarrow\}]).\]
\item[\rm(b)] If $i \neq n$ and $j \neq n$, then
\[\det(L[\{n,i\}^c,\{n,j\}^c]) = (-1)^{i+j} \#(F[\{n\rightarrow\},\{j\rightarrow,i\}]).\]
\item[\rm(c)] If $i \neq 1$ and $j \neq 1$, then
\[\det(L[\{1,i\}^c,\{1,j\}^c]) = (-1)^{i+j} \#(F[\{1\rightarrow\},\{j\rightarrow,i\}]).\]
\end{enumerate}
\end{prop}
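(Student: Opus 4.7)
The plan is to apply the All Minors Matrix Tree Theorem (\cref{mtt}) directly, with appropriate choices of $\Delta_1,\Delta_2$, and to track the two sign factors $(-1)^{\alpha(\Delta_1)+\alpha(\Delta_2)}$ and $(-1)^{n(\pi)}$. In each case I would check that the induced bijection $\pi:\Delta_1\to\Delta_2$ is forced to be ``identity-like'' so that $n(\pi)=0$, and then verify that the outer sign agrees with the factor stated in the proposition.

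For part (a), I would take $\Delta_1=\Delta_2=\{i,j\}$, so $\alpha(\Delta_1)+\alpha(\Delta_2)=2(i+j)$ is even and the outer sign is $+1$. Each spanning forest produced by \cref{mtt} has exactly two trees, whose roots are the two vertices of $\Delta_2=\{i,j\}$. The condition that each tree contains exactly one vertex of $\Delta_1=\{i,j\}$, together with the fact that $i$ and $j$ are themselves the roots, forces $i$ into the tree rooted at $i$ and $j$ into the tree rooted at $j$. Hence $\pi$ is the identity, $n(\pi)=0$, and the sum over forests just counts the objects enumerated by $\#(F[\{i\to\},\{j\to\}])$.

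For part (b), I would set $\Delta_1=\{n,i\}$ and $\Delta_2=\{n,j\}$ (this is where the assumption $i,j\neq n$ is used, to ensure both sets have size $2$). Then $\alpha(\Delta_1)+\alpha(\Delta_2)=2n+i+j$, giving the required outer sign $(-1)^{i+j}$. Because $n$ belongs to both $\Delta_1$ and $\Delta_2$, vertex $n$ must lie in the tree rooted at $n$, and then the ``one vertex of $\Delta_1$ per tree'' condition forces $i$ to lie in the tree rooted at $j$. Thus $\pi(n)=n$ and $\pi(i)=j$. Since $n$ is the largest element of $V$ we have $i<n$, and the only possible inversion $\{i,n\}$ would require $\pi(i)=j>n=\pi(n)$, which is impossible. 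So $n(\pi)=0$ for every admissible forest, and the sum reduces to $\#(F[\{n\to\},\{j\to,i\}])$. Part (c) is completely analogous, with the extremality of $n$ replaced by the extremality of $1$: set $\Delta_1=\{1,i\}$, $\Delta_2=\{1,j\}$, note $\alpha(\Delta_1)+\alpha(\Delta_2)=2+i+j$, and the hypothesis $i,j\neq 1$ now ensures both that the sets have size $2$ and that the would-be inversion $\{1,i\}$ cannot occur since $j>1=\pi(1)$ fails to beat $\pi(i)=j$ in the wrong direction.

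The only real subtlety — the ``main obstacle'' — is the vanishing of $n(\pi)$, which is not automatic for an arbitrary choice of extra vertex in $\Delta_1,\Delta_2$; it is precisely the extremal position of $n$ (in (b)) and of $1$ (in (c)) that eliminates the single inversion that could otherwise appear. For any other shared vertex $k\in V\setminus\{1,n\}$, the analogous identity would need to be stated with an extra sign depending on whether $j<k$ or $j>k$. Apart from this sign check, everything is immediate bookkeeping on \cref{mtt}.
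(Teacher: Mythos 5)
Your proposal is correct and follows essentially the same route as the paper: substitute $\Delta_1=\Delta_2=\{i,j\}$ for (a) and $\Delta_1=\{n,i\}$, $\Delta_2=\{n,j\}$ (resp.\ $\{1,i\}$, $\{1,j\}$) for (b) and (c) into \cref{mtt}, observe that the induced bijection is forced to satisfy $\pi(i)=i,\pi(j)=j$ (resp.\ $\pi(n)=n,\pi(i)=j$), so $n(\pi)=0$ by the extremality of $n$ (or $1$), and read off the outer sign $(-1)^{\alpha(\Delta_1)+\alpha(\Delta_2)}$. Your added remark about the extra sign that would appear for a non-extremal shared vertex is a correct observation beyond what the paper states, but the core argument is the same.
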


\begin{proof}
Substituting $\Delta_1 = \Delta_2 = \{i,j\}$ in \cref{mtt}, we have
\begin{equation}\label{Emtt:ij:rem}
    \det(L[\{i,j\}^c,\{i,j\}^c]) = (-1)^{2i+2j} \sum_F (-1)^{n(\pi)}
\end{equation}
where the sum is over all forests $F$ such that (i) $F$ contains exactly $2$ trees, (ii) each tree in $F$ contains either $i$ or $j$, and (iii) vertices $i$ and $j$ are the roots of the respective trees containing them. Since for each such forest $F$, $\pi(i) = i$ and $\pi(j) = j$, there are no inversions in $\pi$. Thus $n(\pi) = 0$. Hence from \cref{Emtt:ij:rem}, we have 
\[\det(L[\{i,j\}^c,\{i,j\}^c]) = \#(F[\{i\rightarrow\},\{j\rightarrow\}]).\]
This completes the proof of (a). 

To prove (b), we substitute $\Delta_1 = \{n,i\}$ and $\Delta_2=\{n,j\}$ in \cref{mtt} to obtain
\begin{equation}\label{Emtt:ijn:rem}
 \det(L[\{n,i\}^c,\{n,j\}^c]) =  (-1)^{2n+i+j} \sum_F (-1)^{n(\pi)}
\end{equation}
where the sum is over all forests $F$ such that (i) $F$ contains exactly $2$ trees, (ii) each tree in $F$ exactly contains either $n$ or both $i$ and $j$, and (iii) vertices $n$ and $j$ are the roots of the respective trees containing them. For each such forest $F$, $\pi(n) = n$ and $\pi(i) = j$. Since $i,j <n$, there are no inversions in $\pi$ and so $n(\pi) = 0$. From \cref{Emtt:ijn:rem}, we have
\[\det(L[\{n,i\}^c,\{n,j\}^c]) = (-1)^{i+j} \#(F[\{n\rightarrow\},\{j\rightarrow,i\}]).\]
Hence (b) is proved. The proof of (c) is similar to the proof of (b).
\end{proof}

Since $\rank(L)=n-1$ and $L \1=L' \1=0$, all the cofactors of $L$ are equal. Hence, $\kappa(G,i)$ is independent of $i$. From here on, we shall denote $\kappa(G,i)$ simply by $\kappa(G)$.  
 The following is one of the main results in \cite{bal_bap_shiv}.
\begin{thm}\label{rcofsum}
For every distinct $i,j \in V$,
\[r_{ij}+r_{ji}  = \frac{2}{\kappa(G)}\det(L[\{i,j\}^c,\{i,j\}^c])\]
where $r_{ij}$ is defined in $({r_ij})$.
\end{thm}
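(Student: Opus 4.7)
The plan is to express $r_{ij}+r_{ji}$ as the quadratic form $2(e_i-e_j)^T L^{\dagger} (e_i-e_j)$, recognise this as the ``potential drop'' $v_i-v_j$ for the current-injection equation $Lv=e_i-e_j$, and then read off that drop from a reduced $(n-1)\times(n-1)$ system via Cramer's rule.

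Straight from the definition $r_{ij}=l_{ii}^{\dagger}+l_{jj}^{\dagger}-2l_{ij}^{\dagger}$ I would first note
\[r_{ij}+r_{ji} = 2\bigl(l_{ii}^{\dagger} - l_{ij}^{\dagger} - l_{ji}^{\dagger} + l_{jj}^{\dagger}\bigr) = 2(e_i-e_j)^T L^{\dagger} (e_i-e_j).\]
Since $G$ is strongly connected and balanced, $L\1=0$ and $\1^T L=0$, whence $\ker L=\mathrm{span}(\1)$ and $\mathrm{range}(L)=\1^{\perp}$. Because $\1^T(e_i-e_j)=0$, the right-hand side $e_i-e_j$ lies in $\mathrm{range}(L)$, so $v^{*}:=L^{\dagger}(e_i-e_j)$ is a particular solution of $Lv=e_i-e_j$, and any other solution differs from $v^{*}$ by a multiple of $\1$. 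Consequently the scalar $v_i-v_j$ is independent of which solution is chosen and equals $(e_i-e_j)^T L^{\dagger}(e_i-e_j)$.

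I would then normalise by picking the solution with $v_i=0$. Because the only left-nullspace relation of $L$ is $\1^T L=0$, the $i$-th equation of $Lv=e_i-e_j$ is implied by the remaining $n-1$ (the compatibility condition reducing to the identity $\1^T(e_i-e_j)=0$), and dropping it yields the square system
\[L[\{i\}^{c},\{i\}^{c}]\,\tilde v = -\tilde e_j,\]
where $\tilde v$ and $\tilde e_j$ denote the restrictions of $v$ and $e_j$ to $\{i\}^{c}$. By \cref{kappa} the coefficient matrix has determinant $\kappa(G)\ne 0$, so it is invertible, and the standard cofactor expression for a diagonal entry of an inverse gives
\[v_j = -\bigl(L[\{i\}^{c},\{i\}^{c}]^{-1}\bigr)_{jj} = -\frac{\det\!\bigl(L[\{i,j\}^{c},\{i,j\}^{c}]\bigr)}{\kappa(G)}.\]
Combining with $v_i=0$ yields exactly
\[r_{ij}+r_{ji} = 2(v_i-v_j) = \frac{2}{\kappa(G)}\det\!\bigl(L[\{i,j\}^{c},\{i,j\}^{c}]\bigr).\]

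The only slightly delicate point is the reduction step in the preceding paragraph: one must confirm that imposing $v_i=0$ together with the $n-1$ equations indexed by $\{i\}^{c}$ delivers a consistent, full-rank system. Both properties hinge on $L$ having rank exactly $n-1$ with left-nullspace spanned by $\1$, which is precisely the balanced hypothesis on $G$. Once this is in place, the proof collapses to a single application of Cramer's rule.
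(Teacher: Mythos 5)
Your argument is correct. Every step checks out: the identity $r_{ij}+r_{ji}=2(e_i-e_j)^TL^{\dagger}(e_i-e_j)$ is immediate from the definition; since the digraph is balanced and strongly connected, $\ker L=\ker L'=\mathrm{span}(\1)$, so $e_i-e_j$ lies in the range of $L$, the vector $L^{\dagger}(e_i-e_j)$ really does solve $Lv=e_i-e_j$, and the difference $v_i-v_j$ is solution-independent; the row-sum relation $\1'L=0$ justifies dropping the $i$-th equation after normalising $v_i=0$; and the diagonal cofactor formula applied to $L[\{i\}^c,\{i\}^c]$, whose determinant is $\kappa(G)\neq 0$ by \cref{kappa} and the equality of all cofactors of $L$, gives exactly the stated quantity. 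Note, however, that the paper itself does not prove \cref{rcofsum}: it is quoted as one of the main results of \cite{bal_bap_shiv}, so there is no in-paper proof to compare against. Within the paper's own toolkit the natural derivation would instead go through \cref{pinv}: writing $B=L[\{n\}^c,\{n\}^c]$ and $C=B^{-1}$, the bordered formula for $L^{\dagger}$ collapses $r_{in}+r_{ni}$ to $2c_{ii}$, and the same diagonal-cofactor identity you use then yields $2\det(L[\{i,n\}^c,\{i,n\}^c])/\kappa(G)$; this is essentially the computation the authors perform inside \cref{resleq1fordeg1} and \cref{res<dist,cactoid}. Your current-injection route reaches the same cofactor identity without invoking the explicit Moore--Penrose block formula, which makes it more self-contained and arguably more transparent, at the modest cost of the consistency/normalisation discussion you already supply.
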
 
The following lemma can be verified by direct computation and appears in \cite{bal_bap_shiv}.
\begin{lem}\label{pinv}
Let $L$ be a $\z$-matrix such that $L\1 = L' \1 = 0$ and $\rank (L) = n-1$. If $e$ is the vector of all ones in $\mathbb{R}^{n-1}$, then $L$ can be partitioned as
\[L =\left[
\begin{array}{cccc}
B & -B e \\
-e' B  & e'B e \\
\end{array}
\right],\]
where $B$ is a square matrix of order $n-1$ and 
\[L^{\dag} = \left[
\begin{array}{cccc}
B^{-1} - \frac{1}{\displaystyle n} e e' B^{-1} - \frac{1}{\displaystyle n} B^{-1} ee' & - \frac{1}{\displaystyle n} B^{-1} e \\ \\
-\frac{1}{\displaystyle n} e' B^{-1} & 0 \\ 
\end{array}
\right] + \frac{e' B^{-1}e}{\displaystyle n^2} \1\1'.\]
\end{lem}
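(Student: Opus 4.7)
The plan is to verify that the matrix $M$ on the right-hand side of the formula is the Moore-Penrose inverse of $L$ by checking the four Penrose conditions directly. Since $L$ has rank $n-1$ with $\ker L = \ker L' = \mathrm{span}(\1)$, the orthogonal projection $P := I - \tfrac{1}{n}\1\1'$ onto $\1^\perp$ must equal both $LL^\dag$ and $L^\dag L$. Hence it suffices to verify the four identities $M\1 = 0$, $\1'M = 0$, $LM = P$, and $ML = P$: the equalities $LM = ML = P$ furnish the two Penrose symmetry axioms (since $P$ is symmetric), while $LML = LP = L$ follows from $L\1 = 0$ and $MLM = PM = M$ follows from $\1'M = 0$. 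Uniqueness of the Moore-Penrose inverse then identifies $M$ with $L^\dag$.

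A preliminary step is to confirm that $B$ is invertible, so that the formula is meaningful. If $Bx = 0$ for some $x \in \mathbb{R}^{n-1}$, then the vector $(x',0)'$ lies in $\ker L = \mathrm{span}(\1)$, which forces $x = 0$; hence $B^{-1}$ exists.

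For the annihilation identities, I would write $M = M_0 + \tfrac{\beta}{n^2}\1\1'$ where $\beta := e'B^{-1}e$ and $M_0$ denotes the explicit block matrix in the formula. Using $e'e = n-1$, a short computation shows $M_0 \1 = -\tfrac{\beta}{n}\1$, and the rank-one correction contributes exactly $+\tfrac{\beta}{n}\1$, so $M\1 = 0$. The identity $\1'M = 0$ follows from the analogous computation on the left, exploiting that $L$ and $M$ play fully symmetric roles in the argument.

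The substantive step is the block-multiplication verification of $LM = P$, and symmetrically $ML = P$. Partitioning both $L$ and $M$ conformally into blocks of sizes $n-1$ and $1$, the top-left $(n-1)\times(n-1)$ block of $LM$ equals $B \cdot M_{11} + (-Be)\cdot M_{21}$, where $M_{11}$ and $M_{21}$ are the corresponding blocks of $M$. Expanding and using $BB^{-1} = I_{n-1}$, the cross-terms of the form $Bee'B^{-1}$ cancel, and the remaining terms carrying the scalar $\beta$ combine to leave $I_{n-1} - \tfrac{1}{n}ee'$, which is exactly the top-left block of $P$. The other three blocks (top-right, bottom-left, bottom-right) are handled by analogous computations, and the template for $ML$ is identical with left and right interchanged. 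The main obstacle is pure bookkeeping: keeping track of the several rank-one terms $ee'$, $B^{-1}ee'$, $ee'B^{-1}$ and their scalar coefficients, and invoking $e'e = n-1$ at the appropriate moments. No new conceptual idea is needed beyond this careful block algebra.
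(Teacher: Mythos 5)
Your proposal is correct: verifying the Penrose conditions by block computation, with $LM=ML=I-\tfrac{1}{n}\1\1'$ and the annihilation identities $M\1=0$, $\1'M=0$ handling $LML=L$ and $MLM=M$, is exactly the ``direct computation'' the paper invokes (it gives no further detail, deferring to \cite{bal_bap_shiv}), and your preliminary argument that $B$ is invertible via $\ker L=\mathrm{span}(\1)$ is also sound. The block identities you outline (e.g.\ the cancellation of the $Bee'B^{-1}$ cross-terms and the use of $e'e=n-1$) check out, so there is no gap.
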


\begin{lem}\label{L:mdl}
 Let $A$ be an $n \times n$ matrix. If $u$ and $v$ belong to $\rr^n$, then 
 $$\det(A+uv') = \det(A)+v'\adj(A)u.$$
\end{lem}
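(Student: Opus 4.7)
The plan is to prove this by column multilinearity of the determinant, which has the advantage of working for arbitrary (possibly singular) $A$ and directly producing the adjugate. Let $a_1,\dots,a_n$ denote the columns of $A$, so that the $j$-th column of $A+uv'$ is $a_j+v_j u$. Expanding via multilinearity,
\[
\det(A+uv') \;=\; \sum_{S\subseteq\{1,\dots,n\}} \Bigl(\prod_{j\in S} v_j\Bigr) \det(M_S),
\]
where $M_S$ is the matrix whose $j$-th column is $u$ for $j\in S$ and $a_j$ otherwise. For any $S$ with $|S|\ge 2$, the matrix $M_S$ has two columns equal to $u$, hence $\det(M_S)=0$. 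Only $S=\emptyset$ (contributing $\det(A)$) and the singletons $S=\{j\}$ survive.

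For $S=\{j\}$, expanding $\det(M_{\{j\}})$ along its $j$-th column (which is $u$) gives $\sum_i u_i C_{ij}$, where $C_{ij}$ is the $(i,j)$-cofactor of $A$. Summing over $j$,
\[
\sum_{j} v_j \det(M_{\{j\}}) \;=\; \sum_{i,j} v_j\, C_{ij}\, u_i \;=\; v'\adj(A)u,
\]
using the convention $\adj(A)_{ji}=C_{ij}$. Combining with the $S=\emptyset$ contribution yields the claimed identity.

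No real obstacle is expected here; the only care required is to (i) verify the vanishing of the $|S|\ge 2$ terms (immediate from the rank-one structure) and (ii) correctly match the cofactor sum with $v'\adj(A)u$ given the transpose in the definition of the adjugate. As a sanity check, when $A$ is invertible the classical identity $\det(A+uv')=\det(A)(1+v'A^{-1}u)$ combined with $\adj(A)=\det(A)\,A^{-1}$ reproduces the formula, so a continuity argument (perturb $A$ to $A+tI$ and let $t\to 0$) would give an alternative proof in the singular case.
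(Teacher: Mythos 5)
Your proof is correct and complete. Note that the paper does not actually prove this lemma at all: it simply cites Lemma 1.1 of an external reference, so there is no internal argument to compare against. Your column-multilinearity expansion is the standard direct route: the terms with two or more columns replaced by $u$ vanish because of repeated columns, the empty set gives $\det(A)$, and the singleton terms assemble into $\sum_{i,j} v_j C_{ij} u_i = v'\adj(A)u$ once you use the transpose convention $\adj(A)_{ji}=C_{ij}$ --- which you handle correctly, including the observation that deleting the $j$-th column of $M_{\{j\}}$ recovers the cofactors of $A$ itself. This has the advantage of working for singular $A$ directly, without the perturbation/continuity detour you mention as an alternative; the identity is needed in the paper precisely for submatrices that need not be invertible a priori, so a proof that avoids the invertibility-plus-limit argument is the cleaner choice. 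No gaps.
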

\begin{proof}
 See Lemma $1.1$ in \cite{jiu}.
\end{proof}

\section{Result}
We now prove our main result.
Let $G = (V,E)$ be a strongly connected and balanced digraph with vertex set $V = \{1,2,\ldots,n\}$, Laplacian matrix $L$ and resistance matrix $R = (r_{ij})$. First, we prove some  lemmas which will be used later.

\begin{lem}\label{Eressumleq:forestsleqtrees}
Let $i,j \in V$. If $(i,j) \in E$ or $(j,i) \in E$, then 
\begin{equation*}
\det(L[\{i,j\}^c,\{i,j\}^c]) \leq \kappa(G).
\end{equation*}
\end{lem}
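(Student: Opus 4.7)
The plan is to give a purely combinatorial proof by exhibiting an injection from the objects counted on the left-hand side into the objects counted on the right-hand side. By \cref{mtt:2vert:rem}(a), the quantity $\det(L[\{i,j\}^c,\{i,j\}^c])$ equals $\#(F[\{i\rightarrow\},\{j\rightarrow\}])$, i.e.\ the number of spanning $2$-forests $F = T_i \sqcup T_j$ in $G$ whose two components are rooted at $i$ and $j$ respectively. On the other hand, by \cref{kappa} together with the remark that $\kappa(G,v)$ is independent of $v$ (proved just after \cref{mtt:2vert:rem}), $\kappa(G)$ equals the number of spanning trees of $G$ rooted at any chosen vertex. So it suffices to inject spanning $\{i,j\}$-forests into spanning trees rooted at a fixed vertex.

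Without loss of generality, I would assume $(i,j)\in E$; the case $(j,i)\in E$ is identical after swapping the roles of $i$ and $j$ and using $\kappa(G,j)=\kappa(G)$. Given a $2$-forest $F = T_i \sqcup T_j$ counted by $\#(F[\{i\rightarrow\},\{j\rightarrow\}])$, I would define
\[
\Phi(F) \;:=\; F \cup \{(i,j)\},
\]
and check that $\Phi(F)$ is a spanning tree of $G$ rooted at $i$. The verification proceeds by examining indegrees: every vertex $v \notin \{i,j\}$ lies in exactly one of $T_i,T_j$ and has indegree $1$ there, and its indegree is unchanged by adding $(i,j)$; the vertex $j$ had indegree $0$ in $T_j$ (as its root) and acquires the single in-arc $(i,j)$; and $i$ retains indegree $0$. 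Since $T_i$ and $T_j$ are vertex-disjoint trees on a partition of $V$, adding the one edge $(i,j)$ produces a connected subgraph on $n$ vertices with $n-1$ edges, hence its underlying undirected graph is a tree and no directed cycles can be present.

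Injectivity of $\Phi$ is immediate because one recovers $F$ from $\Phi(F)$ by deleting the unique arc $(i,j)$. Consequently
\[
\#\bigl(F[\{i\rightarrow\},\{j\rightarrow\}]\bigr) \;\le\; \kappa(G,i) \;=\; \kappa(G),
\]
which together with \cref{mtt:2vert:rem}(a) gives the desired inequality.

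There is no real obstacle here; the only point that needs a brief but careful argument is that $\Phi(F)$ genuinely satisfies the three conditions defining a spanning tree rooted at $i$, in particular that it has no directed cycles. This follows cleanly because the added arc is the unique edge crossing between the two previously disjoint components of $F$, so the underlying graph of $\Phi(F)$ is a tree.
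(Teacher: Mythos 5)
Your proposal is correct and follows essentially the same route as the paper: interpret $\det(L[\{i,j\}^c,\{i,j\}^c])$ via \cref{mtt:2vert:rem}(a) as counting spanning $2$-forests rooted at $i$ and $j$, then map each such forest injectively to a spanning tree rooted at $i$ by adding the edge $(i,j)$. You merely spell out the indegree and acyclicity checks that the paper leaves implicit.
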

\begin{proof}
Let $i,j \in V$. Without loss of generality, assume that $(i,j) \in E$. By \cref{mtt:2vert:rem}(a),  $\det(L[\{i,j\}^c,\{i,j\}^c])$  is equal to the number of spanning forests $F$ of $G$ such that (i) $F$ contains exactly $2$ trees, (ii) each tree in $F$ contains either $i$ or $j$, and (iii) vertices $i$ and $j$ are the roots of the respective trees containing them. Let $F$ be one such forest. Now, $F+(i,j)$ is a spanning tree of $G$ rooted at $i$. Moreover, each such spanning forest will give a unique spanning tree rooted at $i$. Since $\kappa(G)$ is the number of spanning trees of $G$ rooted at $i$, 
\[\det(L[\{i,j\}^c,\{i,j\}^c]) \leq \kappa(G). \]
\end{proof}
As $G$ is balanced, we know that $\delta_i^{in} = \delta_i^{out}$ for any $i$. This common value will be called the degree of $i$.

\begin{lem}\label{resleq1fordeg1}
Let $(i,j) \in E$. If either $i$ or $j$ has degree $1$, then $r_{ij} \leq 1$.
\end{lem}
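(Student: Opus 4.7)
My plan is to split on whether $i$ or $j$ is the degree-one vertex and, in each case, factor out that vertex via Lemma~\ref{pinv}, exploit the resulting sparsity of the Schur complement $B$, and close with the matrix tree theorem. Throughout I use $e$ for the all-ones vector in $\rr^{n-1}$ (as in Lemma~\ref{pinv}) and $\mathbf{e}_p$ for the $p$-th standard basis vector.

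Suppose first that $i$ has degree one and, by relabeling, take $i=n$; let $k$ denote the unique in-neighbor of $i$ in $G$. Since the only out-edge at $i$ is $(i,j)$ and the only in-edge is $(k,i)$, row $i$ of $L$ equals $\mathbf{e}_i'-\mathbf{e}_j'$ and column $i$ equals $\mathbf{e}_i-\mathbf{e}_k$. Written in the block form of Lemma~\ref{pinv}, this forces $e'B=\mathbf{e}_j'$ and $Be=\mathbf{e}_k$, so the $j$-th row of $B^{-1}$ is $e'$; in particular $(B^{-1})_{jj}=1$ and $(B^{-1}e)_j=n-1$. Substituting into the formula for $L^\dagger$ and expanding $r_{ij}=l_{ii}^\dagger+l_{jj}^\dagger-2l_{ij}^\dagger$, the terms containing the scalar $e'B^{-1}e$ cancel, and I expect the compact identity
\[
r_{ij}=\frac{1+(e'B^{-1})_j}{n}.
\]
It then remains to bound $(e'B^{-1})_j\leq n-1$. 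Writing each entry of column $j$ of $B^{-1}$ via cofactors and Proposition~\ref{mtt:2vert:rem}, $(e'B^{-1})_j$ equals $N/\kappa(G)$, where $N$ is the number of spanning $2$-forests of $G$ having $i$ as one root and with $j$ lying in the other tree. In any such forest the tree containing $i$ must collapse to $\{i\}$ alone, because $i$'s sole out-edge $(i,j)$ would otherwise place $j$ in the $i$-tree; consequently these $2$-forests are in bijection with spanning out-trees of $H:=G-i$ rooted at an arbitrary vertex $r\in V\setminus\{i\}$. Re-adjoining $i$ as a leaf via the edge $(k,i)$ turns each such tree of $H$ into a distinct spanning out-tree of $G$ rooted at $r$, so $\kappa(H,r)\leq\kappa(G,r)=\kappa(G)$; summing over the $n-1$ choices of $r$ gives $N\leq(n-1)\kappa(G)$, and therefore $r_{ij}\leq 1$.

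The case in which $j$ has degree one is handled symmetrically. Relabel so $j=n$ and let $\ell$ be the unique out-neighbor of $j$; the analogous sparsity argument yields $Be=\mathbf{e}_i$, $e'B=\mathbf{e}_\ell'$, $(B^{-1})_{ii}=1$, $(e'B^{-1})_i=n-1$, and $r_{ij}=(1+(B^{-1}e)_i)/n$. Applying the same cofactor/forest translation to the $i$-th row of $B^{-1}$ yields $(B^{-1}e)_i=\sum_F|T_i(F)|/\kappa(G)$, where $F$ ranges over spanning $2$-forests of $G$ with roots $\{j,i\}$ and $T_i(F)$ is the tree in $F$ rooted at $i$. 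Because the other tree always contains $j$, the trivial bound $|T_i(F)|\leq n-1$ finishes the case. The main subtlety I anticipate is the bijection in the first case between spanning out-trees of $H=G-i$ and spanning out-trees of $G$ in which $i$ appears as a leaf; once that observation is in hand, both bounds reduce to a clean counting application of the matrix tree theorem.
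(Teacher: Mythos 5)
Your reduction via \cref{pinv} and the sparsity identities are sound: with the degree-one vertex placed last, $e'B=\mathbf{e}_j'$ and $Be=\mathbf{e}_k$ do follow from the block form, the closed forms $r_{ij}=\bigl(1+(e'B^{-1})_j\bigr)/n$ (first case) and $r_{ij}=\bigl(1+(B^{-1}e)_i\bigr)/n$ (second case) are correct, and your first case is complete: since the only out-edge of $i$ is $(i,j)$, the $i$-rooted tree in every relevant $2$-forest collapses to $\{i\}$, and re-attaching $i$ along its in-edge $(k,i)$ injects these forests into spanning trees of $G$, giving $(e'B^{-1})_j\le n-1$. This is a genuinely different, and arguably cleaner, route than the paper's, which compares $c_{1k}$ with $c_{k1}$ entrywise and, when the head has degree one, passes through a rank-one modification $\widetilde B$, its cofactor sums, and \cref{L:mdl}.

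The gap is the last step of your second case. From $(B^{-1}e)_i=\sum_F|T_i(F)|/\kappa(G)$, where $F$ runs over spanning $2$-forests rooted at $i$ and $j$, the bound $|T_i(F)|\le n-1$ alone only gives $(B^{-1}e)_i\le (n-1)\,\#\{F\}/\kappa(G)$; to conclude $(B^{-1}e)_i\le n-1$ you still need $\#\{F\}=\det(L[\{i,j\}^c,\{i,j\}^c])\le\kappa(G)$. That comparison is not trivial, and it is exactly where the hypothesis $(i,j)\in E$ must be used: it is \cref{Eressumleq:forestsleqtrees}, proved by the injection $F\mapsto F+(i,j)$ into spanning trees rooted at $i$. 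So the claim that the trivial bound ``finishes the case'' is unjustified as written. The fix is easy: either invoke \cref{Eressumleq:forestsleqtrees} (equivalently, map each pair $(F,m)$ with $m\in T_i(F)$ injectively to the spanning tree $F+(i,j)$ rooted at $i$ together with the vertex $m\neq j$, which yields $\sum_F|T_i(F)|\le (n-1)\kappa(G)$ directly), or note that reversing all edges replaces $L$ by $L'$, so the resistance from $j$ to $i$ in the reversed graph equals $r_{ij}$, and there the tail $j$ of the edge $(j,i)$ has degree one, reducing this case to your first one.
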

\begin{proof}
Without loss of generality, let $i=1$ and $j=n$. From \cref{pinv}, we have 
\begin{equation}\label{Eresleq1fordeg1pinv}
L^{\dag} = \left[
\begin{array}{cccc}
B^{-1} - \frac{1}{\displaystyle n} e e' B^{-1} - \frac{1}{\displaystyle n} B^{-1} ee' & - \frac{1}{\displaystyle n} B^{-1} e \\ \\
-\frac{1}{\displaystyle n} e' B^{-1} & 0 \\ 
\end{array}
\right] + \frac{e' B^{-1}e}{\displaystyle n^2} \1\1',   
\end{equation}
where $B = \det(L[\{n\}^c,\{n\}^c])$. Let $C = B^{-1}$, $C = (c_{ij})$, $x = Ce$ and $y =C'e$. By a well-known result on $\z$-matrices (Theorem 2.3 in \cite{horn}), we note that $C$ is a non-negative matrix. Using \cref{Eresleq1fordeg1pinv}, we have
\begin{equation}\label{Eresleq1fordeg1:r1n}
\begin{aligned}
    r_{1n} &= l^{\dag}_{11}+l^{\dag}_{nn}-2l^{\dag}_{1n} \\ &= c_{11}-\frac{1}{n}y_1-\frac{1}{n}x_1+\frac{2}{n}x_1 \\ &= c_{11}-\frac{1}{n}(y_1-x_1).
\end{aligned}
\end{equation}
We claim that $x_1 \leq y_1$. To see this, we consider the following cases:
\begin{enumerate}
    \item[(i)] Suppose degree of vertex $1$ is one. For $k\in \{2,3,\ldots,n-1\}$,
    \begin{equation}\label{Eresleq1fordeg1:deg1=1:c1k=det}
    \begin{aligned}
        c_{1k} &= \frac{(-1)^{1+k}}{\det(B)} \det(B[\{k\}^c,\{1\}^c]) \\ &= \frac{(-1)^{1+k}}{\det(L[\{n\}^c,\{n\}^c])} \det(L[\{n,k\}^c,\{n,1\}^c]).
    \end{aligned}
    \end{equation}
Using \cref{kappa} and \cref{mtt:2vert:rem}(b) in \cref{Eresleq1fordeg1:deg1=1:c1k=det}, we get
\begin{equation}\label{Eresleq1fordeg1:deg1=1:c1k=n(F)}
     c_{1k} = \frac{\#(F[\{n\rightarrow\},\{1\rightarrow,k\}])}{\kappa(G)},
\end{equation}
where $\#(F[\{n \rightarrow\},\{1\rightarrow,k\}])$ is the number of spanning forests $F$ of $G$ such that (i) $F$ contains exactly $2$ trees, (ii) each tree in $F$ exactly contains either $n$ or both $1$ and $k$, and (iii) vertices $n$ and $1$ are the roots of the respective trees containing them. As degree of vertex $1$ is one, $(1,n)$ is the only edge directed away from $1$. So, it is not possible for a forest to have a tree such that the tree does not contain the vertex $n$ but contains both the vertices $1$ and $k$ with $1$ as the root. Therefore, no such forest $F$ exists and hence by \cref{Eresleq1fordeg1:deg1=1:c1k=n(F)}, $c_{1k} = 0$ for each $k \in \{2,3,\ldots,n-1\}$. Using the fact that $C$ is a non-negative matrix, we have
\begin{equation}
    x_1 = \sum_{k=1}^{n-1} c_{1k} = c_{11} \leq \sum_{k=1}^{n-1} c_{k1} = y_1.
\end{equation}
Hence $x_1 \leq y_1$.

\item[(ii)] Suppose degree of $n$ is one. Let $e_i$ be the vector $(0,\dotsc,1,\dotsc,0)' \in {\rr}^{n-1}$ with $1$ as its $i^{th}$ coordinate. Then the Laplacian matrix $L$ can be partitioned as
    \begin{equation}\label{}
L = \left[
\begin{array}{cccc}
B & -e_1\\ \\
-e_p' & 1 \\ 
\end{array}
\right],    
\end{equation}
for some $p \in \{1,2,\ldots,n-1\}$. Let $\widetilde{B} = B-E_{p}$, where $E_{p}$ is the 
$(n-1) \times (n-1)$ matrix with $p^{\rm th}$ column equal to $e_1$ and the remaining columns equal to zero.
   It can be seen that $\widetilde{B}$ has all row and column sums zero and so all its cofactors are identical. For $k \in \{2,3,\ldots,n-1\}$, we have
\begin{equation}\label{Eresleq1fordeg1:degn=1:c_k1}
   \begin{aligned}
        c_{k1} &= \frac{(-1)^{1+k}}{\kappa(G)} \det(B[\{1\}^c,\{k\}^c]) \\ &= \frac{(-1)^{1+k}}{\kappa(G)} \det\big((\widetilde{B}+E_{p})[\{1\}^c,\{k\}^c]\big) \\ &= \frac{(-1)^{1+k}}{\kappa(G)} \det(\widetilde{B}[\{1\}^c,\{k\}^c]) \\ &= \frac{1}{\kappa(G)}\frac{\csum(\widetilde{B})}{(n-1)^2}.
    \end{aligned}
\end{equation}
If $p=1$, then for each $k \in \{2,3,\ldots,n-1\}$,
\begin{equation}\label{Eresleq1fordeg1:degn=1:p=1:c_1k}
   \begin{aligned}
        c_{1k} &= \frac{(-1)^{1+k}}{\kappa(G)} \det(B[\{k\}^c,\{1\}^c]) \\ &= \frac{(-1)^{1+k}}{\kappa(G)} \det\big((\widetilde{B}+E_{1})[\{k\}^c,\{1\}^c]\big) \\ &= \frac{(-1)^{1+k}}{\kappa(G)} \det(\widetilde{B}[\{k\}^c,\{1\}^c]) \\ &= \frac{1}{\kappa(G)}\frac{\csum(\widetilde{B})}{(n-1)^2}.
    \end{aligned}
\end{equation}
Therefore from \cref{Eresleq1fordeg1:degn=1:c_k1} and \cref{Eresleq1fordeg1:degn=1:p=1:c_1k}, $c_{k1} = c_{1k}$ for each $k \in \{2,3,\ldots,n-1\}$. Thus,
\begin{equation}
    x_1 = c_{11}+\sum_{k=2}^{n-1} c_{1k} = c_{11}+\sum_{k=2}^{n-1} c_{k1} = y_1.
\end{equation}

If $p\neq 1$, then for each $k \in \{2,3,\ldots,n-1\}$, we have
\begin{equation}\label{Eresleq1fordeg1:degn=1:p=not1:c_1k}
   \begin{aligned}
        c_{1k} &= \frac{(-1)^{1+k}}{\kappa(G)} \det(B[\{k\}^c,\{1\}^c]) \\ &= \frac{(-1)^{1+k}}{\kappa(G)} \det\big((\widetilde{B}+E_{p})[\{k\}^c,\{1\}^c]\big) \\ &= \frac{(-1)^{1+k}}{\kappa(G)} \det\big(\widetilde{B}[\{k\}^c,\{1\}^c]+E_{p}[\{k\}^c,\{1\}^c]\big). 
    \end{aligned}
\end{equation}  
Let $u_\nu$ be the vector $(0,\dotsc,0,1,0,\ldots,0)' \in {\rr}^{n-2}$ with $1$ as its $\nu^{th}$ coordinate. From \cref{Eresleq1fordeg1:degn=1:p=not1:c_1k} and \cref{L:mdl}, we have
\begin{equation}\label{Eresleq1fordeg1:degn=1:p=not1:c_1k=det}
   \begin{aligned}
        c_{1k} &= \frac{(-1)^{1+k}}{\kappa(G)} \det\big(\widetilde{B}[\{k\}^c,\{1\}^c]+ u_1 u_{p-1}'\big) \\ &=  \frac{(-1)^{1+k}}{\kappa(G)} \big(\det(\widetilde{B}[\{k\}^c,\{1\}^c])+u_{p-1}'\adj(\widetilde{B}[\{k\}^c,\{1\}^c])u_1\big) \\ &= \frac{1}{\kappa(G)}\frac{\csum(\widetilde{B})}{(n-1)^2}+\frac{(-1)^{1+k}}{\kappa(G)} (\adj(\widetilde{B}[\{k\}^c,\{1\}^c]))_{p-1,1} \\ &= \frac{1}{\kappa(G)}\frac{\csum(\widetilde{B})}{(n-1)^2}+\frac{(-1)^{1+k+p}}{\kappa(G)} \det(\widetilde{B}[\{1,k\}^c,\{1,p\}^c]).
    \end{aligned}
\end{equation}
Using \cref{mtt:2vert:rem}(c) in \cref{Eresleq1fordeg1:degn=1:p=not1:c_1k=det}, we get
\begin{equation}
  \begin{aligned}\label{Eresleq1fordeg1:degn=1:p=not1:c_1kleq}
     c_{1k} &= \frac{1}{\kappa(G)}\frac{\csum(\widetilde{B})}{(n-1)^2}+\frac{(-1)^{3+2k+2p}}{\kappa(G)} \#(F[\{1\rightarrow\},\{p\rightarrow,k\}]) \\ &= \frac{1}{\kappa(G)}\frac{\csum(\widetilde{B})}{(n-1)^2}-\frac{1}{\kappa(G)} \#(F[\{1\rightarrow\},\{p\rightarrow,k\}]) \\ &\leq  \frac{1}{\kappa(G)}\frac{\csum(\widetilde{B})}{(n-1)^2}. 
         \end{aligned}
\end{equation}
From \cref{Eresleq1fordeg1:degn=1:c_k1} and \cref{Eresleq1fordeg1:degn=1:p=not1:c_1kleq}, we get $c_{1k} \leq c_{k1}$ for each $k \in \{2,3,\ldots,n-1\}$. Thus
\begin{equation}
     x_1 = c_{11}+\sum_{k=2}^{n-1} c_{1k} \leq c_{11}+\sum_{k=2}^{n-1} c_{k1} = y_1.
\end{equation}
So, $x_1 \leq y_1$. 
\end{enumerate}
This proves our claim.
In view of \cref{Eresleq1fordeg1:r1n}, we now obtain
\begin{equation}\label{Eresleq1fordeg1:r1nleqc11}
    r_{1n} \leq c_{11} = \frac{\det(L[\{1,n\}^c,\{1,n\}^c])}{\kappa(G)}.
\end{equation}
By \cref{Eressumleq:forestsleqtrees}, it follows that $r_{1n} \leq 1$. The proof is complete.
\end{proof}

\begin{lem}\label{unique_path:cactus}
Let $G= (V,E)$ be a directed cactus graph on $n$ vertices. Then there is a unique directed path from $i$ to $j$.
\end{lem}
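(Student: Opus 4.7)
The plan is to argue by contradiction, using the equivalent characterization of a directed cactus noted after \cref{cactus}: any two directed cycles of $G$ share at most one vertex.

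Existence of a directed path from $i$ to $j$ is immediate from the strong connectivity built into the definition of a directed cactus. For uniqueness, suppose $P_1\neq P_2$ are two distinct directed paths from $i$ to $j$. I would first extract a ``bigon'': walking forward from $i$ along both paths, let $u$ be the latest vertex on which the initial segments of $P_1,P_2$ coincide, and let $w$ be the earliest vertex strictly past $u$ lying on both paths (such $w$ exists, since $j$ itself does). Then $u\neq w$, and the portions $Q_1\subseteq P_1$ and $Q_2\subseteq P_2$ from $u$ to $w$ are internally vertex-disjoint directed paths from $u$ to $w$. Using strong connectivity, pick a directed path $R$ from $w$ to $u$ of minimum length.

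The heart of the argument is to exhibit from this data two distinct directed cycles of $G$ sharing at least the two vertices $u$ and $w$, contradicting the cactus characterization. In the simplest case, when $R$ is internally vertex-disjoint from $Q_1\cup Q_2$, the concatenations $Q_1\cdot R$ and $Q_2\cdot R$ are themselves directed cycles sharing $u$, $w$, and indeed the whole of $R$, and the contradiction is immediate.

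The main obstacle is the possibility that $R$ traverses an internal vertex of $Q_1$ or $Q_2$. I would handle this by analyzing the first vertex $v$ along $R$ (strictly after $w$) that lies in $V(Q_1)\cup V(Q_2)$: if $v=u$ we fall back to the simple case, and otherwise $v$ is an internal vertex of, say, $Q_1$. In this situation one splices the $w$-to-$v$ portion of $R$ with the $v$-to-$w$ portion of $Q_1$ to form a directed cycle through $v$ and $w$; combining this with the cycle obtained by an analogous splice involving $Q_2$, or iteratively with a cycle produced from a smaller bigon--plus--return configuration lying strictly inside the original one, yields the required pair of cycles with two common vertices, and the argument closes by a short induction on the length of $R$.
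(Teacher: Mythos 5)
Your route is viable, and it is genuinely different from the paper's, but the closing sentence of your sketch is the one place that has to be made precise. The paper argues from the edge form of \cref{cactus} (every edge lies in exactly one directed cycle): it first treats two internally disjoint $i$--$j$ paths $P,Q$, takes a return path $R$ from $j$ to $i$, and when $R$ meets $P$ or $Q$ it uses the \emph{last} vertices of $R$ lying on $P$ and on $Q$, so that the terminal segment of $R$ past the later of these lies on two distinct cycles in one step; the general case is reduced to the disjoint case at the end via the first common internal vertex of $P$ and $Q$. You instead normalize to a bigon at the outset (fine, and equivalent to the paper's reduction), use the vertex characterization (two distinct cycles share at most one vertex), and attack the intersecting case through the \emph{first} hit $v$ of $R$ on $V(Q_1)\cup V(Q_2)$. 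Everything up to the spliced cycle $C=R[w,v]\cdot Q_1[v,w]$ is correct, but then you hedge between two unfinished mechanisms. If $R$ never meets $Q_2$ internally there is no ``analogous splice involving $Q_2$''; the second cycle should be $Q_2\cdot R$ itself, which is a simple cycle precisely because $R$ misses $Q_2$, and it shares $v$ and $w$ with $C$ while containing $u\notin C$, so the two cycles are distinct. If $R$ does meet $Q_2$ internally, splice at the first such hit $v'$ to get $C'=R[w,v']\cdot Q_2[v',w]$; since $v$ is the overall first hit it precedes $v'$ on $R$, so $C$ and $C'$ share both $w$ and $v$, and they are distinct because their edges into $w$ are the last edges of $Q_1$ and of $Q_2$, which differ since $Q_1\neq Q_2$ are internally disjoint. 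With this case split no induction is needed at all. If you prefer your inductive variant, you must exhibit the smaller configuration rather than assert it: the two internally disjoint $u$--$v$ paths are $Q_1[u,v]$ and the concatenation of $Q_2$ with $R[w,v]$ (internally disjoint exactly by the ``first hit'' property of $v$), and the new return path is $R[v,u]$, strictly shorter than $R$, so induction on the length of $R$ closes the argument. Either completion is legitimate; compared with the paper you trade its one-shot last-exit construction for a case analysis (or recursion), and in exchange your spliced walks are visibly simple cycles, a point the paper's longer concatenations leave to the reader. Note also that the minimality you impose on $R$ is never actually used.
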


\begin{proof}
Let $i,j \in V$. Since $G$ is strongly connected there exists a directed path from $i$ to $j$. Let $P: i \rightarrow v_1 \rightarrow v_2 \rightarrow \cdots \rightarrow v_{m} \rightarrow j$ be one such path. If possible, let $Q: i \rightarrow w_1 \rightarrow w_2 \rightarrow \cdots \rightarrow w_{l} \rightarrow j$ be another directed path from $i$ to $j$. 

First, we assume all the internal vertices of $P$ and $Q$ are distinct. Since $G$ is strongly connected, there will be a path from $j$ to $i$. Let $R: j \rightarrow w_1' \rightarrow w_2' \rightarrow \cdots \rightarrow w_{\alpha}' \rightarrow i$ be a directed path from $j$ to $i$. If $R$ has no internal vertex in common with $P$ and $Q$, then each edge of $R$ will become a part of two distinct cycles $j \rightarrow w_1' \rightarrow \cdots \rightarrow w_{\alpha}' \rightarrow i \rightarrow v_1 \rightarrow \cdots \rightarrow v_{m} \rightarrow j$ and $j \rightarrow w_1' \rightarrow \cdots \rightarrow w_{\alpha}' \rightarrow i \rightarrow w_1  \rightarrow \cdots \rightarrow w_{l} \rightarrow j$ (see \cref{F:unique_path:cactus}(a)). This is a contradiction to the assumption that $G$ is a directed cactus graph. Suppose $R$ has some internal vertices in common with $P$ and $Q$. Let $w_s'$ and $w_{s'}'$ be the vertices in $R$ such that no vertex of $R$ after $w_s'$ is a vertex of $P$ and no vertex of $R$ after $w_{s'}'$ is a vertex of $Q$. Without loss of generality, we assume $s'>s$. This makes each edge of the path $w_{s'}' \rightarrow w_{s'+1}' \rightarrow \cdots \rightarrow w_{\alpha}' \rightarrow i$ to be a part of two distinct cycles $w_{s'}' \rightarrow w_{s'+1}' \rightarrow \cdots \rightarrow w_{\alpha}' \rightarrow i \rightarrow v_1 \rightarrow \cdots \rightarrow w_s' \rightarrow \cdots \rightarrow w_{s'}'$ and $w_{s'}' \rightarrow w_{s'+1}' \rightarrow \cdots \rightarrow w_{\alpha}' \rightarrow i \rightarrow w_1 \rightarrow \cdots \rightarrow w_{s'}'$ (see \cref{F:unique_path:cactus}(b)), which is a contradiction. 

\begin{figure}[tbhp]
\centering
\subfloat[]{\begin{tikzpicture}[shorten >=1pt, auto, node distance=3cm, ultra thick,
   node_style/.style={circle,draw=black,fill=white !20!,font=\sffamily\Large\bfseries},
   edge_style/.style={draw=black, ultra thick}]
\node[label=left:$i$,draw] (i) at  (-1,-1) {};
\node[label=above:$P$] at  (0.25,-0.25) {};
\node[label=above:$v_1$,draw] (v_1) at  (1.5,0.5) {};
\node[label=above:$v_2$,draw] (v_2) at  (4,0.5) {};
\node[label=above:$v_{m}$,draw] (v_{m-1}) at  (6.5,0.5) {};
\node[label=right:$j$,draw] (j) at  (9,-1) {};

\node[label=below:$w_1$,draw] (w_1) at  (1.5,-2.5) {};
\node[label=below:$w_2$,draw] (w_2) at  (4,-2.5) {};
\node[label=below:$Q$] at  (0.25,-1.75) {};
\node[label=below:$w_{l}$,draw] (w_{l-1}) at  (6.5,-2.5) {};

\node[label=below:$w_1'$,draw] (w_1') at  (6.5,-1) {};
\node[label=above:$R$] at (7.75,-1.15) {}; 
\node[label=below:$w_2'$,draw] (w_2') at  (4,-1) {};
\node[label=below:$w_{\alpha}'$,draw] (w_{alpha-1}') at  (1.5,-1) {};

\draw[edge]  (i) to (v_1);
\draw[edge]  (v_1) to (v_2);
\draw[edge]  (v_{m-1}) to (j);
\draw[edge]  (i) to (w_1);
\draw[edge]  (w_1) to (w_2);
\draw[edge]  (w_{l-1}) to (j);

\draw[edge]  (w_{alpha-1}') to (i);
\draw[edge]  (w_1') to (w_2');
\draw[edge]  (j) to (w_1');

\begin{scope}[dashed]
\draw[edge]  (v_2) to (v_{m-1});
\draw[edge]  (w_2) to (w_{l-1});
\draw[edge]  (w_2') to (w_{alpha-1}');
\end{scope}
\end{tikzpicture}}

\subfloat[]{\begin{tikzpicture}[shorten >=1pt, auto, node distance=3cm, ultra thick,
   node_style/.style={circle,draw=black,fill=white !20!,font=\sffamily\Large\bfseries},
   edge_style/.style={draw=black, ultra thick}]
\node[label=left:$i$,draw] (i) at  (-1,-1) {};
\node[label=above:$P$] at  (0,-0.25) {};
\node[label=above:$v_1$,draw] (v_1) at  (1,0.5) {};
\node[label=above:$v_2$,draw] (v_2) at  (3,0.5) {};
\node[label=above:$w_s'$,draw] (w_s'clone) at  (5,0.5) {};
\node[label=above:$v_{m}$,draw] (v_{m-1}) at  (7,0.5) {};
\node[label=right:$j$,draw] (j) at  (9,-1) {};

\node[label=below:$w_1$,draw] (w_1) at  (1,-2.5) {};
\node[label=below:$w_2$,draw] (w_2) at  (3,-2.5) {};
\node[label=below:$Q$] at  (0,-1.75) {};
\node[label=below:$w_{s'}'$,draw] (w_{s'}'clone) at  (5,-2.5) {};
\node[label=below:$w_{l}$,draw] (w_{l-1}) at  (7,-2.5) {};

\node[label=below:$w_1'$,draw] (w_1') at  (7.5,-1) {};
\node[label=above:$R$] at (6.75,-1.15) {}; 
\node[label=below:$w_2'$,draw] (w_2') at  (6,-1) {};
\node[label=below:$w_s'$,draw] (w_s') at  (4,-1) {};
\node[label=below:$w_{s'}'$,draw] (w_{s'}') at  (2.5,-1) {};

\node[label=below:$w_{\alpha}'$,draw] (w_{alpha-1}') at  (0.5,-1) {};

\draw[edge]  (i) to (v_1);
\draw[edge]  (v_1) to (v_2);
\draw[edge]  (v_{m-1}) to (j);
\draw[edge]  (i) to (w_1);
\draw[edge]  (w_1) to (w_2);
\draw[edge]  (w_{l-1}) to (j);

\draw[edge]  (w_{alpha-1}') to (i);
\draw[edge]  (w_1') to (w_2');
\draw[edge]  (j) to (w_1');

\begin{scope}[dashed]
\draw[edge]  (w_2) to (w_{s'}'clone);
\draw[edge]  (w_{s'}'clone) to (w_{l-1});
\draw[edge]  (v_2) to (w_s'clone);
\draw[edge]  (w_s'clone) to (v_{m-1});
\draw  (w_s'clone) to (w_s'); 
\draw  (w_{s'}'clone) to (w_{s'}');
\draw[edge]  (w_2') to (w_s');
\draw[edge]  (w_s') to (w_{s'}');
\draw[edge]  (w_{s'}') to (w_{alpha-1}');
\end{scope}
\end{tikzpicture}}
\caption{}\label{F:unique_path:cactus}
\end{figure}

Suppose $P$ and $Q$ have some internal vertices in common. Let $v_s$ be the first vertex of $P$ in common with $Q$. This means there are two internally vertex disjoint directed paths from $i$ to $v_s$. By a similar argument as above we get a contradiction. Hence, the directed path from $i$ to $j$ is unique.    
\end{proof}

\begin{lem}\label{L:cact:partition}
Let $V:=\{1,\dotsc,n\}$ and $G= (V,E)$ be a directed cactus graph. Suppose $(i,j) \in E$. If both $i$ and $j$ have degree greater than one, then $V$ can be partitioned into three disjoint sets
\begin{enumerate}
    \item[\rm(a)] $\{i,j\}$
    \item[\rm(b)] $V_j(i \rightarrow)$
    \item[\rm(c)] $V_i(j \rightarrow)$,
\end{enumerate}
where $V_\nu(\delta \rightarrow)= \{k \in V \smallsetminus \{\delta,\nu\}: \exists$ a directed path from $\delta$ to $k$ which does not pass through $\nu\}$ (see \cref{F:cactoid:partition}).

\begin{figure}[tbhp]
\centering
\begin{tikzpicture}[shorten >=1pt, auto, node distance=3cm, ultra thick,
   node_style/.style={circle,draw=black,fill=white !20!,font=\sffamily\Large\bfseries},
   edge_style/.style={draw=black, ultra thick}]
\node[label=below:$i$,draw] (1) at  (1,0) {};
\node[label=below:$ $,draw] (i_1) at  (-1, 1) {};
\node[label=left:$V_j(i \rightarrow)$,draw] (i_2) at  (-1.5, 0.5) {};
\node[label=below:$ $,draw] (i_3) at  (-1.5, -0.5) {};
\node[label=below:$ $,draw] (i_4) at  (-1, -1) {};

\node[label=below:$j$,draw] (n) at  (5,0) {};
\node[label=below:$ $,draw] (j_1) at  (7, 1) {};
\node[label=right:$V_i(j \rightarrow)$,draw] (j_2) at  (7.5, 0.5) {};
\node[label=below:$ $,draw] (j_3) at  (7.5, -0.5) {};
\node[label=below:$ $,draw] (j_4) at  (7, -1) {};

\node[label=above:$ $,draw] (w_{1}) at  (6.5,1.5) {};

\draw[edge]  (1) to (n);
\draw[edge]  (w_{1}) to (1);

\begin{scope}[dashed]
\draw[edge]  (1) to (i_1);
\draw[edge]  (i_2) to (1);
\draw[edge]  (i_3) to (1);
\draw[edge]  (1) to (i_4);
\draw[edge]  (n) to (j_2);
\draw[edge]  (j_1) to (n);
\draw[edge]  (j_4) to (n);
\draw[edge]  (n) to (j_3);
\draw[edge]  (n) to (w_{1});
\end{scope}
\end{tikzpicture}
\caption{Partition of a directed cactus graph.}
\label{F:cactoid:partition}
\end{figure}
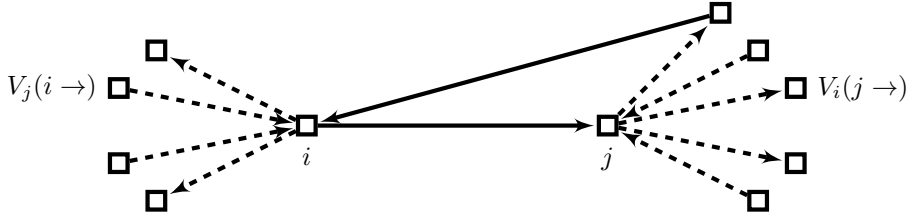
\end{lem}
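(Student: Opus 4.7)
The plan is to verify the two properties of a partition separately: pairwise disjointness of the three sets, and that their union exhausts $V$. The key tool in both halves is \cref{unique_path:cactus}, which asserts that in a directed cactus there is a unique directed path between any ordered pair of vertices.

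First I would dispense with the easy disjointness: by the very definition of $V_\nu(\delta\rightarrow)$, both $V_j(i\rightarrow)$ and $V_i(j\rightarrow)$ are subsets of $V\smallsetminus\{i,j\}$, so each is disjoint from $\{i,j\}$. For the crucial disjointness $V_j(i\rightarrow)\cap V_i(j\rightarrow)=\emptyset$, I would argue by contradiction. Suppose some $k$ lies in both sets. Then there is a simple directed path $P_1$ from $i$ to $k$ that avoids $j$, and a simple directed path $P_2$ from $j$ to $k$ that avoids $i$. Since $(i,j)\in E$, prepending this edge to $P_2$ produces a simple directed path from $i$ to $k$ that passes through $j$ (simplicity is preserved because $P_2$ does not revisit $i$, and its own vertices are distinct). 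This path is plainly different from $P_1$, contradicting \cref{unique_path:cactus}.

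For the covering, let $k\in V\smallsetminus\{i,j\}$. Strong connectivity yields a simple directed path $P$ from $i$ to $k$. If $j$ does not appear on $P$, then $k\in V_j(i\rightarrow)$ directly. Otherwise, the suffix of $P$ starting at $j$ is a directed path from $j$ to $k$; because $P$ is simple and begins at $i$, this suffix cannot contain $i$, hence $k\in V_i(j\rightarrow)$. Either way $k$ lies in one of the two sets, and the partition is established.

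I expect the main obstacle to be spotting the right construction for the disjointness step, namely using the edge $(i,j)$ to concatenate with $P_2$ and produce a second $i$-to-$k$ path, at which point \cref{unique_path:cactus} does the work. The hypothesis that $i$ and $j$ have degree greater than one is not actually required for the partition identity, but it guarantees that both $V_j(i\rightarrow)$ and $V_i(j\rightarrow)$ are nonempty (each of $i,j$ has some other outgoing edge, giving an immediate element in the corresponding set), which is presumably how the lemma will be applied later.
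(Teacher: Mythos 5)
Your proposal is correct and follows essentially the same route as the paper's proof: disjointness from $\{i,j\}$ by definition, the cross-disjointness $V_j(i\rightarrow)\cap V_i(j\rightarrow)=\emptyset$ by prepending the edge $(i,j)$ to the $j$-to-$k$ path and invoking \cref{unique_path:cactus}, and the covering by taking the suffix of an $i$-to-$k$ path after $j$. Your explicit attention to simplicity of the concatenated path and your closing remark that the degree hypothesis is only needed for nonemptiness of the two sets are both consistent with (and slightly more careful than) the paper's argument.
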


\begin{proof}
As $i$ and $j$ have degree greater than one, $V_j(i \rightarrow)$ and $V_i(j \rightarrow)$ are non empty.
By definition, 
$$\{i,j\} \cap V_j(i \rightarrow) = \emptyset~\text{and}~ \{i,j\} \cap V_i(j \rightarrow) = \emptyset.$$
It remains to see that $V_j(i \rightarrow) \cap V_i(j \rightarrow) = \emptyset$. 
If possible, let $k \in V_j(i \rightarrow) \cap V_i(j \rightarrow)$. Let $P: i \rightarrow v_1 \rightarrow v_2 \rightarrow \cdots \rightarrow v_{m} \rightarrow k$ be a directed path from $i$ to $k$ which does not pass through $j$ and $R: j \rightarrow w_1 \rightarrow w_2 \rightarrow \cdots \rightarrow w_{\alpha} \rightarrow k$ be a directed path from $j$ to $k$ which does not pass through $i$ (see \cref{F:res<dist:cactoid:P:R}). Since $v_1 \neq j$, the edges $(i,j)$ and $(i,v_1)$ are not same. Hence, there are two different directed paths $P$ and $i \rightarrow j \rightarrow w_1 \rightarrow w_2 \rightarrow \cdots \rightarrow w_{\alpha} \rightarrow k$ from $i$ to $k$. This contradicts \cref{unique_path:cactus}. Thus, $V_j(i \rightarrow) \cap V_i(j \rightarrow) = \emptyset$.

\begin{figure}[tbhp]
\begin{center}
\begin{tikzpicture}[shorten >=1pt, auto, node distance=3cm, ultra thick,
   node_style/.style={circle,draw=black,fill=white !20!,font=\sffamily\Large\bfseries},
   edge_style/.style={draw=black, ultra thick}]
\node[label=above:$i$,draw] (1) at  (0,0) {};
\node[label=left:$P$] (1) at  (0,0) {};
\node[label=above:$v_1$,draw] (v_1) at  (1.5,-0.25) {};
\node[label=above:$v_2$,draw] (v_2) at  (3,-0.5) {};
\node[label=above:$v_{m}$,draw] (v_{m-1}) at  (4.5,-0.75) {};
\node[label=right:$k$,draw] (v_m) at  (6,-1) {};
\node[label=below:$j$,draw] (n) at  (0,-2) {};
\node[label=left:$R$] (n) at  (0,-2) {};
\node[label=below:$w_{\alpha}$,draw] (w_{l-1}) at  (4.5,-1.25) {};
\node[label=below:$w_{2}$,draw] (w_{2}') at  (3,-1.5) {};
\node[label=below:$w_{1}$,draw] (w_{1}') at  (1.5,-1.75) {};

\draw[edge]  (1) to (v_1);
\draw[edge]  (v_1) to (v_2);
\draw[edge]  (v_{m-1}) to (v_m);
\draw[edge]  (1) to (n);
\draw[edge]  (n) to (w_{1}');
\draw[edge]  (w_{1}') to (w_{2}');
\draw[edge]  (w_{l-1}) to (v_m);

\begin{scope}[dashed]
\draw[edge]  (v_2) to (v_{m-1});
\draw[edge]  (w_{2}') to (w_{l-1});
\end{scope}
\end{tikzpicture}
\caption{Directed paths $P$ and $R$.} \label{F:res<dist:cactoid:P:R}
\end{center}
\end{figure}
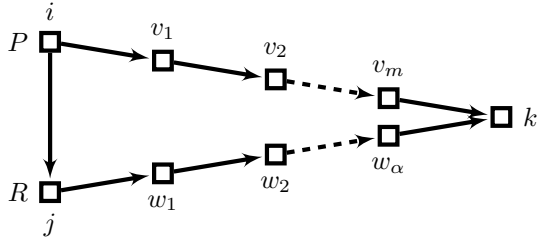 

Let $k \in V$ be such that $k \notin \{i,j\}$ and $k \notin V_j(i \rightarrow)$. Since $G$ is strongly connected there exists a directed path, say $P$ from $i$ to $k$. However, $P$ must pass through $j$. Thus, the part of $P$ between vertices $j$ and $k$ is a directed path from $j$ to $k$ which does not pass through $i$. So, $k \in V_i(j \rightarrow)$. Hence $V = \{i,j\} \cup V_j(i \rightarrow) \cup V_i(j \rightarrow)$ is a disjoint partition of $V$.
\end{proof}
For a subgraph $\widetilde{G}$ of $G$, we use $V(\widetilde{G})$ to denote the vertex set of $\widetilde{G}$. We now prove our main result.

\begin{thm}\label{res<dist,cactoid}
Let $G= (V,E)$ be a directed cactus graph with $V=\{1,2,\ldots,n\}$. If $R=(r_{ij})$ and $D=(d_{ij})$ are the resistance and distance matrices of $G$, respectively, then $r_{ij} \leq d_{ij}$ for each $i,j \in \{1,2,\ldots,n\}$. 
\end{thm}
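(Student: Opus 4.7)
The plan is to reduce \cref{res<dist,cactoid} to the edge case $r_{uv}\le 1$ for every $(u,v)\in E$, and then invoke the triangle inequality along the (unique) shortest directed path. More precisely, fix $i,j\in V$. By \cref{unique_path:cactus}, the directed path from $i$ to $j$ is unique; since the shortest directed path has length $d_{ij}$, this unique path must be $i=u_0\to u_1\to\cdots\to u_{d_{ij}}=j$ with each $(u_{\ell-1},u_\ell)\in E$. The triangle inequality for $r$ (recalled in the introduction) yields
\[
r_{ij}\;\le\;\sum_{\ell=1}^{d_{ij}} r_{u_{\ell-1}u_\ell}\;\le\;d_{ij},
\]
where the last step uses the edge bound $r_{uv}\le 1$.

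To prove the edge bound, I would split on the degree of the endpoints. If either $u$ or $v$ has degree $1$, \cref{resleq1fordeg1} applies directly. Otherwise, both $u$ and $v$ have degree greater than $1$, and I would follow the template of the proof of \cref{resleq1fordeg1}: relabel so that $u=1$ and $v=n$, apply \cref{pinv} with $B=L[\{n\}^c,\{n\}^c]$, set $C=B^{-1}=(c_{k\ell})$, $x=Ce$, $y=C'e$, and observe
\[
r_{1n}\;=\;c_{11}-\tfrac{1}{n}(y_1-x_1).
\]
The goal reduces to showing $x_1\le y_1$, because then $r_{1n}\le c_{11}=\det(L[\{1,n\}^c,\{1,n\}^c])/\kappa(G)\le 1$ by \cref{Eressumleq:forestsleqtrees}.

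To verify $x_1\le y_1$ I would use \cref{L:cact:partition} to write $V\setminus\{1,n\}=V_n(1\!\to)\,\dot{\cup}\,V_1(n\!\to)$, and interpret each $c_{1k}$ and $c_{k1}$ as a (normalized) spanning-forest count via \cref{mtt:2vert:rem}(b):
\[
c_{1k}=\frac{\#(F[\{n\!\to\},\{1\!\to,k\}])}{\kappa(G)},\qquad c_{k1}=\frac{\#(F[\{n\!\to\},\{k\!\to,1\}])}{\kappa(G)}.
\]
For $k\in V_1(n\!\to)$, every directed path from $1$ to $k$ passes through $n$ (by the partition), so the forests counted by $c_{1k}$ cannot exist, giving $c_{1k}=0\le c_{k1}$. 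For $k\in V_n(1\!\to)$ both counts may be positive, and I would produce an injection from the forests counted by $c_{1k}$ into those counted by $c_{k1}$, using the unique cycle $\mathcal C$ through the edge $(1,n)$ together with \cref{unique_path:cactus} to swap the role of roots along $\mathcal C$ while leaving the $n$-rooted tree untouched. Summing over $k\in\{2,\dots,n-1\}$ yields $x_1\le y_1$.

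The main obstacle is the inequality $c_{1k}\le c_{k1}$ for $k\in V_n(1\!\to)$, since in this regime both spanning-forest counts are genuinely nonzero. The key structural facts I would lean on are (i) uniqueness of the directed path between any two vertices in a cactus, and (ii) the fact that each edge lies in exactly one directed cycle, so the ``reversal'' on $\mathcal C$ used to swap roots is well-defined and produces a forest satisfying the required root constraints; vertices of $\mathcal C$ and vertices hanging off $\mathcal C$ in further subcacti are then treated separately.
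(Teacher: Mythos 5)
Your overall architecture matches the paper's proof: reduce via the triangle inequality to the edge bound $r_{uv}\le 1$, dispose of the degree-one case by \cref{resleq1fordeg1}, then for both endpoints of degree greater than one use \cref{pinv} to write $r_{1n}=c_{11}-\tfrac1n(y_1-x_1)$, aim for $x_1\le y_1$, and conclude with $c_{11}=\det(L[\{1,n\}^c,\{1,n\}^c])/\kappa(G)\le 1$ via \cref{Eressumleq:forestsleqtrees}, using the partition of \cref{L:cact:partition} and the forest interpretation of $c_{1k}$, $c_{k1}$ from \cref{mtt:2vert:rem}(b). Up to that point your proposal is correct and essentially identical to the paper.

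However, the crux --- $c_{1k}\le c_{k1}$ for $k\in V_n(1\to)$ --- is exactly where your argument stops being a proof, and the sketch you give does not work as described. You propose an injection obtained by ``swapping roots along the unique cycle $\mathcal C$ through the edge $(1,n)$,'' but that cycle consists of the edge $(1,n)$ followed by a directed path from $n$ back to $1$, so every internal vertex of $\mathcal C$ lies in $V_1(n\to)$, not in $V_n(1\to)$. Reversing or rerooting along $\mathcal C$ therefore touches only the $n$-side of the partition and cannot convert a tree containing $1$ and $k$ rooted at $1$ into one rooted at $k$; what is needed is a directed path from $k$ back to $1$ avoiding $n$, which has nothing to do with $\mathcal C$. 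The paper closes this gap by two separate counting facts: first, the forest counted by $c_{1k}$ is not merely nonzero but \emph{unique} (its $1$-rooted tree has vertex set exactly $V_n(1\to)\cup\{1\}$, and two distinct such trees would give two directed paths from $1$ to some vertex, contradicting \cref{unique_path:cactus}), so $c_{1k}=1/\kappa(G)$; second, it proves $V_n(1\to)\subseteq V_n(\to 1)$ and that return paths to $1$ stay inside $V_n(1\to)\cup\{1\}$, so the induced subgraph on $V_n(1\to)\cup\{1\}$ is strongly connected and admits a spanning tree rooted at $k$, which together with the $n$-rooted tree $\overline{T_n}$ yields a forest counted by $c_{k1}$, giving $c_{k1}\ge 1/\kappa(G)$. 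Without an argument of this kind (or a genuinely constructed injection, which would in effect have to reprove these facts), your proposal leaves the decisive inequality unestablished.
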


\begin{proof}
By triangle inequality, it suffices to show that if $(i,j) \in E$, then $r_{ij} \leq 1$. Let $(i,j) \in E$. In view of \cref{resleq1fordeg1}, it suffices to show this inequality
when both $i$ and $j$ have degree greater than one. Without loss of generality, assume $i=1$ and $j=n$. By \cref{L:cact:partition}, the vertex set $V$ can be partitioned into three disjoint sets
\begin{enumerate}
    \item[(a)] $\{1,n\}$
    \item[(b)] $V_n(1 \rightarrow)$
    \item[(c)] $V_1(n \rightarrow)$.
\end{enumerate}  Let $L$ be the Laplacian matrix of $G$. From \cref{pinv}, we have 
\begin{equation}\label{Eres<dist:cactoid:pinv}
L^{\dag} = \left[
\begin{array}{cccc}
B^{-1} - \frac{1}{\displaystyle n} e e' B^{-1} - \frac{1}{\displaystyle n} B^{-1} ee' & - \frac{1}{\displaystyle n} B^{-1} e \\ \\
-\frac{1}{\displaystyle n} e' B^{-1} & 0 \\ 
\end{array}
\right] + \frac{e' B^{-1}e}{\displaystyle n^2} \1\1'.    
\end{equation}
where $B = \det(L[\{n\}^c,\{n\}^c])$. Let $C = B^{-1}$, $C = (c_{ij})$, $x = Ce$ and $y = C'e$. Note that $C$ is a non-negative matrix. Using \cref{Eres<dist:cactoid:pinv}, we have
\begin{equation}\label{Eres<dist,cactoid,r1n}
\begin{aligned}
    r_{1n} &= l^{\dag}_{11}+l^{\dag}_{nn}-2l^{\dag}_{1n} \\ &= c_{11}-\frac{1}{n}y_1-\frac{1}{n}x_1+\frac{2}{n}x_1 \\ &= c_{11}-\frac{1}{n}(y_1-x_1).
\end{aligned}
\end{equation}
As in proof of \cref{resleq1fordeg1}, it suffices to show that $x_1 \leq y_1$. Let $k\in \{2,3,\ldots,n-1\}$. Then by \cref{Eresleq1fordeg1:deg1=1:c1k=n(F)}
\begin{equation}\label{Eres<dist:cactoid:c1k=n(F)}
     c_{1k} = \frac{\#(F[\{n\rightarrow\},\{1\rightarrow,k\}])}{\kappa(G)},
\end{equation}
where $\#(F[\{n\rightarrow\},\{1\rightarrow,k\}])$ is the number of spanning forests of $G$ such that (i) $F$ contains exactly $2$ trees, (ii) each tree in $F$ exactly contains either $n$ or both $k$ and $1$, and (iii) vertices $n$ and $1$ are the roots of the respective trees containing them. We shall show that for each $k \in V_n(1 \rightarrow)$, such a forest $F$ exists and is unique. Fix $k \in V_n(1 \rightarrow)$.

Existence:~Since $\kappa(G) = \det(B) \neq 0$, there is a spanning tree $T$ of $G$ rooted at $1$. Since the edge $(1,n)$ is the only directed path from $1$ to $n$ in $G$, it must be an edge in $T$. By removing the edge $(1,n)$ from $T$, we obtain a spanning forest $\overline{F}$ containing exactly two trees $\overline{T_1}$ and $\overline{T_n}$ rooted at $1$ and $n$, respectively. It remains to show that $k \in V(T_1)$. In order to do this, we prove that 
\[V(\overline{T_1}) = V_n(1 \rightarrow) \cup \{1\}~~\mbox{and}~~ V(\overline{T_n}) = V_1(n \rightarrow)\cup \{n\}.\] Let $v \in V(\overline{T_1}) \smallsetminus \{1\}$. Then there is a directed path from $1$ to $v$ in $G$ which does not pass through $n$. This implies $V(\overline{T_1}) \subseteq V_n(1 \rightarrow)\cup \{1\}$. Similarly, if $w \in V(\overline{T_n}) \backslash \{n\}$ then there is a directed path from $n$ to $w$ in $G$ which does not pass through $1$ and so $V(\overline{T_n}) \subseteq V_1(n \rightarrow)\cup \{n\}$. Let $v \in V_n(1 \rightarrow)$ be such that $v \notin V(\overline{T_1})$. Since $V = V(\overline{T_1}) \cup V(\overline{T_n})$, we have $v \in V(\overline{T_n}) \subseteq V_1(n \rightarrow)\cup \{n\}$. Thus,
$$v \in V_n(1 \rightarrow) \cap V_1(n \rightarrow),$$ which is a contradiction. Hence $V(\overline{T_1}) = V_n(1 \rightarrow) \cup \{1\}$. Similarly, $V(\overline{T_n}) = V_1(n \rightarrow)\cup \{n\}$. Thus, $k \in V(\overline{T_1})$ and so $$\#(F[\{n\rightarrow\},\{1\rightarrow,k\}]) \neq 0.$$
Hence, a forest with the required properties exists.

Uniqueness: Let $\widetilde{F}$ be another forest other than $\overline{F}$. Suppose $\widetilde{T_1}$ and $\widetilde{T_n}$ be the trees in $\widetilde{F}$ rooted at $1$ and $n$, respectively. It can be easily seen $$V(\widetilde{T_1}) = V_n(1 \rightarrow) \cup \{1\} = V(\overline{T_1})~~\mbox{and}~~V(\widetilde{T_n}) = V_1(n \rightarrow)\cup \{n\} = V(\overline{T_n}).$$ If the trees $\overline{T_1}$ and $\widetilde{T_1}$ are not identical, then there will be a vertex $v \in V(\overline{T_1})$ such that there are two different directed paths from $1$ to $v$. This contradicts \cref{unique_path:cactus}. So, the trees $\overline{T_1}$ and $\widetilde{T_1}$ are identical. The same happens with $\overline{T_n}$ and $\widetilde{T_n}$. Thus, the forest  $\overline{F}$ is unique and so 
\begin{equation}\label{E:nF[n:1tok]=1}
    \#(F[\{n\rightarrow\},\{1\rightarrow,k\}])=1
\end{equation}for each $k \in V_n(1 \rightarrow)$.

Since for every $k \notin V_n(1 \rightarrow)$, there is no directed path from $1$ to $k$ that does not pass through $n$, $\#(F[\{n\rightarrow\},\{1\rightarrow,k\}]) = 0$. From \cref{Eres<dist:cactoid:c1k=n(F)} and \cref{E:nF[n:1tok]=1}, for each $k\in \{2,3,\ldots,n-1\}$

\begin{equation}\label{Eres<dist:cactoid:c1k=1:0} 
    c_{1k} = \begin{cases}
    \frac{1}{\displaystyle \kappa(G)} &~ \text{if}~ k \in V_n(1 \rightarrow) \\
    0 &~ \text{otherwise.}
    \end{cases}
\end{equation}

Let $V_n(\rightarrow 1)= \{k \in V \smallsetminus \{1,n\}: \exists$ directed path from $k$ to $1$ which does not pass through $n \}$. Now, we shall show that $V_n(1 \rightarrow) \subset V_n(\rightarrow 1)$. Let $k \in V_n(1 \rightarrow)$ and $P: 1 \rightarrow v_1 \rightarrow v_2 \rightarrow \cdots \rightarrow v_{m} \rightarrow k$ be a directed path from $1$ to $k$ which does not pass through $n$. If possible, assume $k \notin V_n(\rightarrow 1)$ i.e. every directed path from $k$ to $1$ contains the vertex $n$. Since $G$ is strongly connected, there is at least one such path say $Q: k \rightarrow w_1 \rightarrow w_2 \rightarrow \cdots \rightarrow w_{l} \rightarrow n \rightarrow w_1' \rightarrow \cdots \rightarrow w_{\alpha}' \rightarrow 1$  (see \cref{F:res<dist:cactoid:P:Q}). Since $v_1 \neq n$, the edges $(1,n)$ and $(1,v_1)$ are not same. Hence, there are two different directed paths $1 \rightarrow n$ and $1 \rightarrow v_1 \rightarrow v_2 \rightarrow \cdots \rightarrow v_{m} \rightarrow k \rightarrow w_1 \rightarrow w_2 \rightarrow \cdots \rightarrow w_{l} \rightarrow n$ from $1$ to $n$. This contradicts \cref{unique_path:cactus}. Hence $V_n(1 \rightarrow) \subset V_n(\rightarrow 1)$. 

\begin{figure}[tbhp]
\centering
\begin{tikzpicture}[shorten >=1pt, auto, node distance=3cm, ultra thick,
   node_style/.style={circle,draw=black,fill=white !20!,font=\sffamily\Large\bfseries},
   edge_style/.style={draw=black, ultra thick}]
\node[label=left:$1$,draw] (1) at  (0,0) {};
\node[label=below:$P$] at  (0.75,0) {};
\node[label=below:$v_1$,draw] (v_1) at  (1.5,0) {};
\node[label=below:$v_2$,draw] (v_2) at  (3,0) {};
\node[label=below:$v_{m}$,draw] (v_{m-1}) at  (4.5,0) {};
\node[label=right:$k$,draw] (v_m) at  (6,0) {};
\node[label=right:$Q$] at  (6,0.75) {};
\node[label=above:$n$,draw] (n) at  (1.5,1.5) {};
\node[label=above:$w_1'$,draw] (w_1') at  (2.25,3) {};
\node[label=above:$ $,draw] (w_2') at  (0.75,3) {};
\node[label=right:$ $,draw] (C) at  (0,2) {};
\node[label=left:$w_{\alpha}'$,draw] (w_{l-1}') at  (0,1) {};

\node[label=above:$w_{l}$,draw] (w_{l-1}) at  (3,1.5) {};
\node[label=above:$w_{1}$,draw] (w_{1}) at  (6,1.5) {};
\node[label=above:$w_{2}$,draw] (w_{2}) at  (4.5,1.5) {};

\draw[edge]  (1) to (v_1);
\draw[edge]  (v_1) to (v_2);
\draw[edge]  (v_{m-1}) to (v_m);
\draw[edge]  (v_m) to (w_{1});
\draw[edge]  (w_{1}) to (w_{2});
\draw[edge]  (w_{l-1}) to (n);
\draw[edge]  (1) to (n);
\draw[edge]  (n) to (w_1');
\draw[edge]  (w_1') to (w_2');
\draw[edge]  (w_{l-1}') to (1);

\begin{scope}[dashed]
\draw[edge]  (v_2) to (v_{m-1});
\draw[edge]  (w_{2}) to (w_{l-1});
\draw[edge]  (w_2') to (C);
\draw[edge]  (C) to (w_{l-1}');
\end{scope}
\end{tikzpicture}
\caption{Directed paths $P$ and $Q$ } \label{F:res<dist:cactoid:P:Q} 
\end{figure} 
If $k\in \{2,3,\ldots,n-1\}$, then 
    \begin{equation}\label{Eres<dist:cactoid:ck1=det}
    \begin{aligned}
        c_{k1} &= \frac{(-1)^{1+k}}{\det(B)} \det(B[\{1\}^c,\{k\}^c]) \\ &= \frac{(-1)^{1+k}}{\det(L[\{n\}^c,\{n\}^c])} \det(L[\{n,1\}^c,\{n,k\}^c]).
    \end{aligned}
    \end{equation}

Using \cref{kappa} and \cref{mtt:2vert:rem}(b) in \cref{Eres<dist:cactoid:ck1=det}, we get
\begin{equation}\label{Eres<dist:cactoid:ck1=n(F)}
\begin{aligned}
     c_{k1} &= \frac{\#(F[\{n\rightarrow\},\{k\rightarrow,1\}])}{\kappa(G)} 
\end{aligned}
\end{equation}
where $\#(F[\{n\rightarrow\},\{k\rightarrow,1\}])$ is the number of spanning forests of $G$ such that (i) $F$ contains exactly $2$ trees, (ii) each tree in $F$ exactly contains either $n$ or both  $k$ and $1$, and (iii) vertices $n$ and $k$ are the roots of the respective trees containing them. We shall show that for each $k \in V_n(1 \rightarrow)$, $\#(F[\{n\rightarrow\},\{k\rightarrow,1\}]) \geq 1$.

Consider the induced subgraph $\widetilde{G}$ of $G$ with vertex set $V(\widetilde{G}) = V_n(1\rightarrow) \cup \{1\}$. Note that for each vertex $x \in V(\widetilde{G})$ there is a directed path from $1$ to $x$ in $\widetilde{G}$. Since $V(\widetilde{G}) \smallsetminus \{1\} \subset V_n(\rightarrow 1)$,  for every $x \in V(\widetilde{G})$ there is a directed path from $x$ to $1$ in $G$ which does not pass through $n$. For fixed $x \in V(\widetilde{G})\smallsetminus \{1\}$, let $P: x \rightarrow v_1 \rightarrow v_2 \rightarrow \cdots \rightarrow v_{m} \rightarrow 1$ be one such path. We claim that each internal vertex of $P$ is a vertex in $\widetilde{G}$. If possible, let $v_s$ be an internal vertex of $P$ which is not in $V(\widetilde{G})$. Since $v_s \notin V_n(1 \rightarrow)$, there is a directed path say $Q: 1 \rightarrow w_1 \rightarrow w_2 \rightarrow \cdots \rightarrow w_{l} \rightarrow n \rightarrow w_{l+1} \rightarrow \cdots \rightarrow w_{l'} \rightarrow v_s$ from $1$ to $v_s$ which passes through $n$ (see \cref{F:Gtilde:sc}). Also there will be a directed path say $R: 1 \rightarrow w'_1 \rightarrow w'_2 \rightarrow \cdots \rightarrow w'_{\alpha} \rightarrow x$ from $1$ to $x$ which does not pass through $n$. Note that, there are two different paths $Q$ and $1 \rightarrow w'_1 \rightarrow w'_2 \rightarrow \cdots \rightarrow w'_{\alpha} \rightarrow x \rightarrow v_1 \rightarrow v_2 \rightarrow \cdots \rightarrow v_{s}$ from $1$ to $v_s$. This contradicts \cref{F:unique_path:cactus}. Hence each internal vertex of $P$ is a vertex in $\widetilde{G}$. Thus, $P$ is a directed path from $x$ to $1$ in $\widetilde{G}$. 

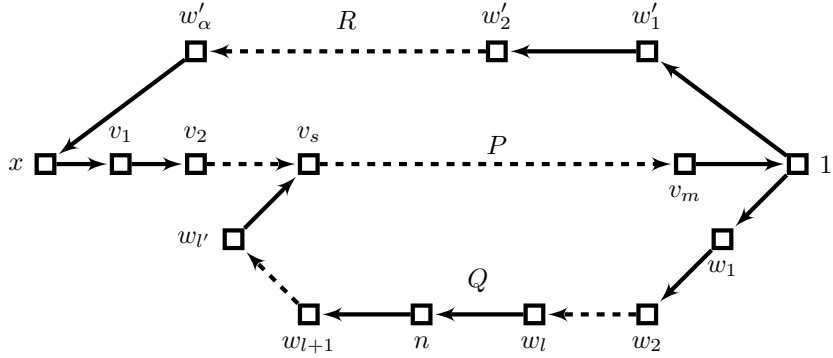
\begin{figure}[tbhp]
\centering
\begin{tikzpicture}[shorten >=1pt, auto, node distance=3cm, ultra thick,
   node_style/.style={circle,draw=black,fill=white !20!,font=\sffamily\Large\bfseries},
   edge_style/.style={draw=black, ultra thick}]
\node[label=left:$x$,draw] (x) at  (-1,-1) {};
\node[label=above:$R$] at  (3,0.5) {};
\node[label=above:$w_{\alpha}'$,draw] (w_{alpha}') at  (1,0.5) {};
\node[label=above:$w_2'$,draw] (w_2') at  (5,0.5) {};
\node[label=above:$w_1'$,draw] (w_1') at  (7,0.5) {};
\node[label=right:$1$,draw] (1) at  (9,-1) {};

\node[label=below:$n$,draw] (n) at  (4,-3) {};
\node[label=below:$w_1$,draw] (w_1) at  (8,-2) {};
\node[label=below:$w_2$,draw] (w_2) at  (7,-3) {};
\node[label=above:$Q$] at  (4.75,-3) {};
\node[label=below:$w_l$,draw] (w_l) at  (5.5,-3) {};
\node[label=left:$w_{l'}$,draw] (w_{l'}) at  (1.5,-2) {};
\node[label=below:$w_{l+1}$,draw] (w_{l+1}) at  (2.5,-3) {};

\node[label=below:$v_m$,draw] (v_m) at  (7.5,-1) {};
\node[label=above:$P$] at (5,-1.15) {}; 
\node[label=above:$v_s$,draw] (v_s) at  (2.5,-1) {};
\node[label=above:$v_2$,draw] (v_2) at  (1,-1) {};
\node[label=above:$v_1$,draw] (v_1) at  (0,-1) {};

\draw[edge]  (x) to (v_1);
\draw[edge]  (v_1) to (v_2);
\draw[edge]  (v_m) to (1);
\draw[edge]  (1) to (w_1);
\draw[edge]  (w_1) to (w_2);
\draw[edge]  (w_l) to (n);
\draw[edge]  (n) to (w_{l+1});
\draw[edge]  (w_{l'}) to (v_s);

\draw[edge]  (w_{alpha}') to (x);
\draw[edge]  (w_1') to (w_2');
\draw[edge]  (1) to (w_1');

\begin{scope}[dashed]
\draw[edge]  (w_2) to (w_l);
\draw[edge]  (v_2) to (v_s);
\draw[edge]  (v_s) to (v_m);
\draw[edge]  (w_{l+1}) to (w_{l'});
\draw[edge]  (w_2') to (w_{alpha}');
\end{scope}
\end{tikzpicture}
\caption{Directed paths $P$,$Q$ and $R$ in $\widetilde{G}$.} \label{F:Gtilde:sc} 
\end{figure} 

This means $\widetilde{G}$ is a strongly connected digraph. Let $\widetilde{L}$ be the Laplacian matrix of $\widetilde{G}$. Then $\rank(\widetilde{L}) = |V(\widetilde{G})|-1$. Let $k \in V_n(1\rightarrow)$. Then $\kappa(\widetilde{G},k) = \det(\widetilde{L}[\{k\}^c,\{k\}^c]) \neq 0$. Hence there exists an oriented spanning tree of $\widetilde{G}$ rooted at $k$. Let $\widetilde{T}$ be a spanning tree of $\widetilde{G}$ rooted at $k$ and $\overline{F}$ be the spanning forest of $G$ with trees $\overline{T_1}$ and $\overline{T_n}$ obtained as before. Let $F'$ be the forest consisting of trees $\widetilde{T}$ and $\overline{T_n}$. Since $\widetilde{T}$ and $\overline{T_n}$ are rooted at $k$ and $n$, respectively, and $V(\widetilde{T}) = V_n(1\rightarrow) \cup \{1\}$ and $V(\overline{T_n}) = V_1(n \rightarrow) \cup \{n\}$, it follows that $F'$  is a required spanning forest. Hence for each $k \in V_n(1\rightarrow)$, we have $\#(F[\{n\rightarrow\},\{k\rightarrow,1\}]) \geq 1$. From \cref{Eres<dist:cactoid:ck1=n(F)}, we have
\begin{equation}\label{Eres<dist:cactoid:ck1>1}
c_{k1}  \geq \frac{1}{\kappa(G)},~~~\text{whenever}~ k \in  V_n(1\rightarrow).
\end{equation}
Since $C$ is a non-negative matrix, from \cref{Eres<dist:cactoid:c1k=1:0} and \cref{Eres<dist:cactoid:ck1>1}, we have
\begin{equation}
\begin{aligned}
x _1 &= \sum_{k=1}^{n-1} c_{1k} \\ &= c_{11}+ \sum_{k \in  V_n(1\rightarrow)}c_{1k} \\ &= c_{11}+ \sum_{k \in  V_n(1\rightarrow)}\frac{1}{\kappa(G)} \\ &\leq   c_{11}+ \sum_{k \in  V_n(1\rightarrow)}c_{k1}   \leq \sum_{k=1}^{n-1} c_{k1} = y_1.
\end{aligned}
\end{equation}
Hence, $r_{1n} \leq 1$. This completes the proof.
\end{proof}
We complete the paper with the following example.
\begin{example}\rm
Consider the strongly connected and directed cactus graph $G$.
\begin{figure}[tbhp]
\centering
\begin{tikzpicture}[shorten >=1pt, auto, node distance=3cm, ultra thick,
   node_style/.style={circle,draw=black,fill=white !20!,font=\sffamily\Large\bfseries},
   edge_style/.style={draw=black, ultra thick}]
\node[label=above:$1$,draw] (1) at  (1.75,0) {};
\node[label=above:$2$,draw] (2) at  (3.5,0) {};
\node[label=below:$3$,draw] (3) at  (3.5,-2) {};
\node[label=above:$6$,draw] (6) at  (0,0) {};
\node[label=below:$4$,draw] (4) at  (0,-2) {};
\node[label=left:$5$,draw] (5) at  (-1.5,-1) {};
\node[label=below:$7$,draw] (7) at  (1.75,-2) {}; 
\draw[edge]  (1) to (2);
\draw[edge]  (2) to (3);
\draw[edge]  (3) to (1);
\draw[edge]  (1) to (4);
\draw[edge]  (4) to (5);
\draw[edge]  (5) to (6);
\draw[edge]  (6) to (1);
\draw[edge]  (1.65,-0.38) to (1.65,-1.7);
\draw[edge]  (1.85,-1.62) to (1.85,-0.35);
\end{tikzpicture}
\caption{The directed cactus graph $G$.} \label{eg:cactus2}
\end{figure}
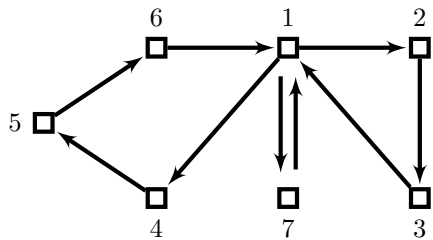 
The resistance and distance matrices of $G$ are:
\begin{small}
\begin{equation*}\label{eg,res&dis,cactus}
R = 
\left[
{\begin{array}{rrrrrrr}
0 & \frac{6}{7} & \frac{8}{7} & \frac{5}{7} & 1 & \frac{9}{7} & 1 \\
\frac{8}{7} & 0 & \frac{2}{7} & \frac{13}{7} & \frac{15}{7} & \frac{17}{7} & \frac{15}{7} \\
\frac{6}{7} & \frac{12}{7} & 0 & \frac{11}{7} & \frac{13}{7} & \frac{15}{7} & \frac{13}{7} \\
\frac{9}{7} & \frac{15}{7} & \frac{17}{7} & 0 & \frac{2}{7} & \frac{4}{7} & \frac{16}{7} \\
1 & \frac{13}{7} & \frac{15}{7} & \frac{12}{7} & 0 & \frac{2}{7} & 2 \\
\frac{5}{7} & \frac{11}{7} & \frac{13}{7} & \frac{10}{7} & \frac{12}{7} & 0 & \frac{12}{7} \\
1 & \frac{13}{7} & \frac{15}{7} & \frac{12}{7} & 2 & \frac{16}{7} & 0
\end{array}}
\right]~~\mbox{and}~~ 
D = 
\left[
{\begin{array}{rrrrrrr}
0 & 1 & 2 & 1 & 2 & 3 & 1 \\
2 & 0 & 1 & 3 & 4 & 5 & 3 \\
1 & 2 & 0 & 2 & 3 & 4 & 2 \\
3 & 4 & 5 & 0 & 1 & 2 & 4 \\
2 & 3 & 4 & 3 & 0 & 1 & 3 \\
1 & 2 & 3 & 2 & 3 & 0 & 2 \\
1 & 2 & 3 & 2 & 3 & 4 & 0
\end{array}}
\right].
\end{equation*} 
\end{small}
It can be seen that for each pair of vertices, the resistance is less than the shortest distance.
\end{example}


\bibliography{references}
\end{document}